\numberwithin{equation}{section}
\renewcommand{\geq}{\geqslant}
\renewcommand{\leq}{\leqslant}
\DeclareMathOperator{\R}{\mathbb{R}}
\DeclareMathOperator{\Q}{\mathbb{Q}}
\DeclareMathOperator{\N}{\mathbb{N}}
\DeclareMathOperator{\cl}{cl}
\DeclareMathOperator{\fL}{\mathfrak{L}}
\DeclareMathOperator{\eps}{\varepsilon}
\DeclareMathOperator{\supp}{supp}
\DeclareMathOperator{\dimA}{\dim\sb{\mathrm{A}}}
\DeclareMathOperator{\dimL}{\dim\sb{\mathrm{L}}}
\DeclareMathOperator{\dimAt}{{\dim}_{\mathrm{A}}^{\theta}}
\DeclareMathOperator{\dimLt}{{\dim}_{\mathrm{L}}^{\theta}}
\DeclareMathOperator{\dimqA}{{\dim}_{\mathrm{qA}}}
\DeclareMathOperator{\dimqL}{{\dim}_{\mathrm{qL}}}
\newtheorem{theorem}{Theorem}[section]
\newtheorem{proposition}[theorem]{Proposition}
\newtheorem{lemma}[theorem]{Lemma}
\newtheorem{conjecture}[theorem]{Conjecture}
\newtheorem{question}[theorem]{Question}
\title{Regularity versus smoothness of measures}
\author{Jonathan M.~Fraser\footnote{JMF was financially supported by  the EPSRC Standard Grant EP/R015104/1 and the  Leverhulme Trust Research Project Grant RPG-2019-034.} 
\and Sascha Troscheit\footnote{ST was financially supported by the A\"OU Collaborative Grant 103\"ou6.}}
\begin{document}

\maketitle

\vspace{-5mm}
\begin{center}
	$^*$Mathematical Institute, University of St Andrews, North Haugh, St Andrews, Fife, KY16 9SS, UK.\\
	E-mail: jmf32@st-andrews.ac.uk
	\\ \vspace{3mm}
	$^\dagger$Faculty of Mathematics, University of Vienna, Oskar-Morgenstern-Platz 1, 1090 Wien, Austria.	\\
	E-mail: sascha.troscheit@univie.ac.at
	\end{center}

\begin{abstract}
The Assouad and lower dimensions and dimension spectra   quantify the \emph{regularity} of a measure
by considering the relative measure of concentric balls.  On the other hand, one can quantify the
\emph{smoothness} of an absolutely continuous measure by considering  the $L^p$ norms of its
density.  We establish sharp  relationships between these two notions. Roughly speaking,
we show that smooth measures must be regular, but that regular measures need not be smooth. \\

\emph{Mathematics Subject Classification} 2010:  primary: 28A80; secondary: 37C45.

\emph{Key words and phrases}: Assouad dimension, Assouad spectrum, lower dimension, lower spectrum, $L^p$-spaces.
\end{abstract}

\section{Introduction and preliminaries.}

\subsection{Assouad type dimensions and spectra of measures.}
The Assouad and lower dimensions of  measures, also known as the regularity dimensions, are
important notions in dimension theory and geometric measure theory. They capture extremal scaling behaviour 
of measures by considering the relative measure of concentric balls and have a strong connection to
doubling properties. A fundamental result is that a measure has finite Assouad dimension if and only
if it is doubling
and that a measure has positive lower dimensions if and only if it is inverse doubling, see
e.g.~\cite{Kaenmaki17,Kaenmaki13}.
As such, these dimensions quantify the regularity of a measure. The Assouad and lower spectrum
provide a more nuanced analysis along these lines by fixing the relationship between the  radii of
the concentric balls according to a parameter $\theta \in (0,1)$ which is then varied to produce the
spectra.   Motivated by progress on  Assouad type dimensions and spectra for \emph{sets},  the
analogues for measure  were   investigated in \cite{Kaenmaki17,Kaenmaki13,Fraser18,
Hare18,Hare18-2}.

Throughout we  assume that $\mu$ is a Borel probability measure  on a compact metric space $(X,d)$.
These assumptions can be weakened in places but we  make them for expository reasons. We write
$\supp \mu$ for the support of $\mu$.  
The \emph{Assouad dimension} of $\mu$ is defined as
\[
\dimA \mu\ = \  \inf \Bigg\{ \alpha \  : \ ( \exists  \, C>0) \, (\forall \, 0<r<R<1) \, (\forall x
\in \supp \mu), \  \frac{\mu(B(x,R))}{\mu(B(x,r))} \ \leq \ C \bigg(\frac{R}{r}\bigg)^\alpha  \Bigg\}.
\]
Its dual, the \emph{lower dimension}, is defined analogously as
\[
\dimL \mu \ = \  \sup\Bigg\{ \alpha \  : \ ( \exists  \, C>0) \, (\forall \, 0<r<R<1) \, (\forall x
\in \supp \mu), \  \frac{\mu(B(x,R))}{\mu(B(x,r))} \ \geq \ C \bigg(\frac{R}{r}\bigg)^\alpha  \Bigg\}.
\]
The \emph{Assouad spectrum} is the function defined by
\begin{multline*}
\theta  \ \mapsto \ \dimAt \mu \ = \  \inf \bigg\{ \alpha \  : \   (\exists C>0) \, (\forall 0<R<1)
\,  (\forall x \in \supp\mu) ,\\  \frac{\mu(B(x,R))}{\mu(B(x,R^{1/\theta}))} \ \leq \ C
\left(\frac{R}{R^{1/\theta}}\right)^\alpha \bigg\},
\end{multline*}
where  $\theta$ varies over  $(0,1)$.   The related \emph{quasi-Assouad dimension} can be defined by
\[
\dimqA \mu \ = \  \lim_{\theta \to 1} \dimAt \mu 
\]
when it is finite. This is not the original definition, which stems from  \cite{LuXi}, but a
convenient equivalent formulation which was established in \cite[Proposition 6.2]{Hare18-2},
following \cite{ScottishCanadian}.   Similarly, the  \emph{lower spectrum} is defined by
\begin{multline*}
\theta  \ \mapsto \ \dimLt \mu \ = \  \sup \bigg\{ \alpha \  : \   (\exists C>0) \, (\forall 0<R<1)
\,  (\forall x \in \supp\mu) ,\\  \frac{\mu(B(x,R))}{\mu(B(x,R^{1/\theta}))} \ \geq \ C
\left(\frac{R}{R^{1/\theta}}\right)^\alpha \bigg\}
\end{multline*}
and the \emph{quasi-lower dimension} by
\[
\dimqL \mu \ = \  \lim_{\theta \to 1} \dimLt \mu .
\]
The Assouad and lower spectra  (of sets) were defined in \cite{Spectraa} and are designed to extract
finer geometric information than the Assouad, lower, and box-counting dimensions considered in isolation.  
See the survey \cite{FraserSurvey} for more on this approach to dimension theory.

These Assouad-type dimensions and spectra are  related by
\begin{align*}
  \dimL \mu \leq \dimqL\mu \leq \dimLt \mu &\leq \inf_{x\in\supp\mu}\underline{\dim}_{\mathrm{loc}}\mu(x) \\
&\leq\sup_{x\in\supp\mu}\overline{\dim}_{\mathrm{loc}}\mu(x) \leq \dimAt\mu \leq \dimqA\mu \leq
\dimA\mu,
\end{align*}
where $\overline{\dim}_{\mathrm{loc}}\mu(x)$ and $\underline{\dim}_{\mathrm{loc}}\mu(x)$ are the
upper and lower 
local dimensions of $\mu$  at a point $x$.  We do not use the local dimensions but mention them here
to emphasise that the Assouad and lower dimensions are extremal, since most familiar notions of
dimensions for measures lie in between the infimal and supremal lower dimensions, e.g. the Hausdorff
dimension. For more information, including basic properties, concerning Assouad-type  dimensions of
measures, see \cite{Fraser18, Hare18,Hare18-2, Kaenmaki17,Kaenmaki13}.

\subsection{$L^p$ properties of measures.}

A probability measure $\mu$ supported on a compact subset $X \subset \mathbb{R}^d$ is \emph{absolutely
continuous} (with respect to the Lebesgue measure), if all Lebesgue null sets are given zero $\mu$
measure.  In particular, this means that there is a Lebesgue integrable function $f$, the
\emph{density} or \emph{Radon-Nikodym derivative}, such that 
\[
\mu(E) = \int_E f(x) \, dx
\]
for all Borel sets $E$.  Given $p\geq 1$, the  space $L^p(X)$ is defined to consist of all integrable functions $g$ such that
\[
\|g \|_p : = \left(\int_X g(x)^p\, dx\right)^{1/p} < \infty.
\]
The space $L^\infty(X)$ denotes the space of essentially bounded functions. In a slight abuse of
notation, we say $\mu \in L^p(X)$ if $\mu$ is absolutely continuous with density $f \in L^p$.  
Given an absolutely continuous measure $\mu$, we can thus understand how smooth $\mu$ is by
determining precisely  for which $p$ we have  $\mu \in L^p(X)$.  Since we assume $\mu$ is compactly
supported, $\mu \in L^{p_2}(X)$ implies $\mu \in L^{p_1}(X)$ for all $1 \leq p_1 \leq p_2 \leq
\infty$ and therefore it is harder for the $\mu$  to be in $L^p(X)$ as $p$ increases.  We think of
measures  being smoother if they lie in  $L^p(X)$ for larger $p$ and $L^\infty(X)$ as consisting of the smoothest measures possible, according to this analysis. 

One can consider absolute continuity with respect to arbitrary reference measures in place of the
Lebesgue measure.  Much of our work would also apply in this setting, but  we  focus on
$X=[a,b]\subset \R$ with the Lebesgue measure as the reference measure
and write $L^p$ instead of $L^p([a,b])$.  Our results also easily extend to higher dimensions, that
is, when the reference measure is $d$-dimensional Lebesgue measure.   We focus on the 1-dimensional
case with Lebesgue measure as the reference measure since this is the most natural and important
case and also to simplify our exposition.

It will also be useful to consider `inverse $L^p$ spaces'.  We write $f\in
L^{-p}$  if  the set $E =\{ x \in X : f(x) = 0\} \subset X$  is of Lebesgue measure zero and
\begin{equation*}
  \left(\int_{X\setminus E}1/f(x)^p dx\right)^{1/p} <\infty.
\end{equation*}
Analogous to above we write $\mu \in L^{-p}$ if $\mu$ is absolutely continuous with density in $  L^{-p}$.

\section{Main results: smoothness versus regularity.}

The objective of this article is to investigate the relationships between regularity and smoothness,
as described by Assouad type dimensions and spectra and $L^p$ properties, respectively.

First, we remark that for an
absolutely continuous measure $\mu$, the condition that $\mu \in L^p$ does not guarantee that
$\dimL\mu >0$ or  $\dimA\mu <\infty$.  All one can conclude is that $0\leq \dimL\mu\leq 1\leq
\dimA\mu\leq \infty$. 
Even the strong assumption that the density of a measure is bounded, does not guarantee a measure is doubling,
take for instance the density on $[-1,1]$ defined by
\[
  f(x)=\begin{cases}2^{-k} &\text{ if } 2^{-k} \leq x < 2^{-k+1}\\\frac{2}{3} &\text{ otherwise}\end{cases}
\]
for $k\in\N$. One can check that $\int_{-1}^1 f(x)dx =1$ and thus $\mu$ is a probability measure.
The ball $B(2^{-k},2^{-k})$ has measure $4/3 \cdot 4^{-k}$, whereas $\mu(B(2^{-k},2^{-k+1}))=2/3
\cdot2^{-k}+4/3 \cdot 4^{-k+1}$ and therefore
\[
\frac{\mu(B(2^{-k},2^{-k+1}))}{\mu(B(2^{-k},2^{-k}))} \ \geq \ 2^{k-1} \to \infty.
\]
Therefore, the measure is not doubling and, in particular,  $\dimA \mu=\infty$.  Bounded density is
also not enough to say something about the quasi-Assouad dimension or Assouad spectrum, see below.
It turns out we need to be able to control the density from both sides in order to get good
estimates for the Assouad type dimensions. Our main result establishes a sharp correspondence along
these lines.
\begin{theorem}
  \label{thm:main}
 Suppose $p_1, p_2 \in [1,\infty]$ are such that $\mu\in L^{p_1} \cap
  L^{-p_2}$.  If $p_1, p_2 < \infty$, then
  \begin{equation}\label{eq:main}
    \dimAt \mu \leq 1+\frac{p_1 + \theta p_2}{p_1 p_2 (1-\theta)} 
    \quad \text{and} \quad 
    \dimLt \mu \geq 1-\frac{\theta p_1 + p_2}{p_1 p_2 (1-\theta)} .
  \end{equation}
If  $\mu\in L^{\infty} \cap
  L^{-\infty}$, then $\mu$ is 1-Ahlfors regular and $\dimA \mu = \dimL \mu = 1$ and if $\mu\in L^{p_1} \cap
  L^{-\infty}$ or $\mu\in L^{\infty} \cap
  L^{-p_2}$, then  one can obtain bounds by taking the limit as $p_1$ or $p_2$ tends to infinity in
  \eqref{eq:main}.  Moreover, all of these bounds are sharp.  
\end{theorem}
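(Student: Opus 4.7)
The plan is to prove the two inequalities in \eqref{eq:main} via Hölder's inequality (once in its standard form and once in a reverse form that exploits $f\in L^{-p_2}$), to deduce the $L^\infty\cap L^{-\infty}$ statement from $\mu$ being comparable to the Lebesgue measure, and to establish sharpness through an explicit density constructions.

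For the upper bound on $\dimAt\mu$ I first note that $f\in L^{-p_2}$ forces $f>0$ almost everywhere on $[a,b]$, so every open subset of $[a,b]$ has positive $\mu$-measure and consequently $\supp\mu=[a,b]$. For $x\in[a,b]$ and $R<1$, standard Hölder with exponents $p_1$ and $p_1/(p_1-1)$ gives
\[
  \mu(B(x,R)) = \int_{B(x,R)\cap[a,b]} f\,dx \ \leq\ \|f\|_{p_1}\,|B(x,R)\cap[a,b]|^{1-1/p_1} \ \leq\ 2^{1-1/p_1}\|f\|_{p_1}\,R^{1-1/p_1}.
\]
The matching reverse bound comes from the factorisation $1=f^{p_2/(p_2+1)}\cdot f^{-p_2/(p_2+1)}$ together with Hölder with the conjugate pair $\left((p_2+1)/p_2,\,p_2+1\right)$, which yields
\[
  |B(x,r)\cap[a,b]| \ \leq\ \mu(B(x,r))^{p_2/(p_2+1)} \left(\int_{B(x,r)} f^{-p_2}\,dx\right)^{1/(p_2+1)} \ \leq\ \mu(B(x,r))^{p_2/(p_2+1)}\,\|1/f\|_{p_2}^{p_2/(p_2+1)},
\]
and therefore $\mu(B(x,r))\ge |B(x,r)\cap[a,b]|^{1+1/p_2}/\|1/f\|_{p_2}\ge r^{1+1/p_2}/\|1/f\|_{p_2}$ once $r$ is small. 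Taking the quotient of the two estimates, substituting $r=R^{1/\theta}$, and comparing powers of $R$ against $(R/r)^{\alpha}=R^{-\alpha(1-\theta)/\theta}$ reduces admissibility of $\alpha$ in the definition of $\dimAt\mu$ to the arithmetic inequality $\alpha(1-\theta)\ge (1-\theta)+1/p_2+\theta/p_1$, which is exactly the claimed bound. The inequality for $\dimLt\mu$ follows from the symmetric argument, now using the Hölder upper bound on $\mu(B(x,r))$ in terms of $\|f\|_{p_1}$ and the reverse-Hölder lower bound on $\mu(B(x,R))$ in terms of $\|1/f\|_{p_2}$.

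The $L^\infty\cap L^{-\infty}$ case is immediate: essentially bounded $f$ and $1/f$ give $c_1|E|\le\mu(E)\le c_2|E|$ for every Borel $E\subseteq[a,b]$, so $\mu$ is $1$-Ahlfors regular on $[a,b]$. The two asymmetric cases are handled either by rerunning the above with the corresponding Hölder step replaced by an essential-supremum bound, or equivalently by passing to the limit $p_1\to\infty$ or $p_2\to\infty$ in \eqref{eq:main}.

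Sharpness is where I expect the main technical burden to lie. A first, single-scale example is the power-law density $f(x)=c|x-x_0|^{-s}$ on $[a,b]$, which belongs to $L^{p_1}\cap L^{-p_2}$ exactly when $-1/p_1<s<1/p_2$, and whose associated $\mu$ has local dimension $1-s$ at $x_0$; pushing $s\uparrow 1/p_2$ already saturates the $\theta\to 0$ limit of the Assouad-spectrum bound, and $s\downarrow -1/p_1$ saturates the corresponding lower-spectrum endpoint. To capture the full $\theta$-dependence of \eqref{eq:main} for interior $\theta\in(0,1)$, one must arrange a genuine mismatch between the scales $R$ and $r=R^{1/\theta}$, so I would construct a density with a countable family of narrow spikes at points $x_k$ of width $w_k$ and height $h_k$ tied to a base scale $R_k\downarrow 0$ by $w_k\asymp R_k^{1/\theta}$, with $h_k$ and $w_k$ calibrated so that both $\int f^{p_1}$ and $\int f^{-p_2}$ remain finite while $\mu(B(x_k,R_k))/\mu(B(x_k,R_k^{1/\theta}))$ is forced up to the target exponent $\alpha^\ast=1+(p_1+\theta p_2)/(p_1 p_2(1-\theta))$. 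Tuning this triple of parameters simultaneously against the two integrability constraints and the target exponent is the crux of the construction, and the mirror ``wells'' construction (where the density dips rather than spikes) gives sharpness of the lower-spectrum bound.
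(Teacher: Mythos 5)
Your derivation of the two inequalities in \eqref{eq:main} is correct and is essentially the paper's argument: H\"older with exponent $p_1$ to bound $\mu(B(x,R))$ above by $R^{1-1/p_1}$, and the reverse H\"older inequality (which you sensibly re-derive by applying ordinary H\"older to $1=f^{p_2/(p_2+1)}\cdot f^{-p_2/(p_2+1)}$) to bound $\mu(B(x,r))$ below by $r^{1+1/p_2}$; the exponent arithmetic checks out, and the $L^\infty\cap L^{-\infty}$ and limiting cases are handled the same way as in the paper.

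The genuine gap is the sharpness claim, which is part of the theorem and which you explicitly leave as a plan rather than a construction (``tuning this triple of parameters \dots is the crux''). Two concrete issues. First, for the Assouad spectrum your proposed route (spikes of width $w_k\asymp R_k^{1/\theta}$ calibrated per $\theta$) is both unexecuted and more complicated than necessary: a single asymmetric two-sided power law, $f(x)=Cx^{-1/p_1}$ for $x>0$ and $f(x)=C(-x)^{1/p_2}$ for $x<0$, already does the job for \emph{every} $\theta$ simultaneously, provided you centre the balls at the scale-dependent point $-r$ with $r=R^{1/\theta}$, so that $B(-r,r)$ sees only the $L^{-p_2}$-critical side (mass $\approx r^{1+1/p_2}$) while $B(-r,R)$ captures the $L^{p_1}$-critical singularity (mass $\gtrsim R^{1-1/p_1}$). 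Your single symmetric density $c|x-x_0|^{-s}$ cannot do this (and, as a minor slip, its membership condition is $-1/p_2<s<1/p_1$, not $-1/p_1<s<1/p_2$). Second, for the lower spectrum a scale-tied spike construction is indeed needed, but (a) to make the ratio small you need \emph{both} constraints active at once --- a tall narrow plateau of height calibrated against $\|f\|_{p_1}$ sitting inside a low floor calibrated against $\|1/f\|_{-p_2}$ --- so a pure ``well'' will not saturate the bound; and (b) your construction fixes one $\theta$ per family of spikes, whereas a single measure sharp for all $\theta\in(0,1)$ requires something like the paper's device of enumerating the rationals with infinite repetition and invoking continuity of $\theta\mapsto\dimLt\mu$. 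Until these constructions are actually carried out and the integrability of $f^{p_1'}$ and $f^{-p_2'}$ verified, the ``moreover, all of these bounds are sharp'' part of the theorem remains unproved.
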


 The fact that these bounds above are sharp shows that knowledge of
  $L^p$-smoothness and inverse $L^p$-smoothness are  not sufficient to give bounds on the regularity
  as measured by the quasi-Assouad and Assouad dimensions, or the quasi-lower and lower dimensions.
  This is seen by letting $\theta \to 1$.

\subsection{Proof of Theorem \ref{thm:main}.}

Throughout the rest of the paper we write $A \lesssim B$ to mean there exists a uniform constant
$c>0$ such that $A \leq c B$.   Similarly, we write $A \gtrsim B$ to mean $B \lesssim A$ and  $A
\approx B$ if $A \lesssim B$ and $A \gtrsim B$.

\subsubsection{Establishing the bounds.}

  The proof uses H\"older's inequality and the reverse H\"older inequality. That is, for all $p>1$
  and $q$ such that $1/p+1/q=1$ and measurable functions $f,g$ we have 
  \[ \lVert f g \rVert_1 \leq \lVert f \rVert_p \lVert g \rVert_q\quad \text{and} \quad
  \lVert f g \rVert_1 \geq \lVert f\rVert _{-p} \lVert g \rVert_{p/(p+1)},
  \]
  where for the latter we also require that  $f(x)\neq 0$ for almost   every $x$.  We note that
  $\lVert f\rVert _{-p} $ and $ \lVert g \rVert_{p/(p+1)}$ are not norms but convenient notation for
\[
\lVert f\rVert _{-p} = \left( \int f(x)^{-p} dx \right)^{-1/p}
\quad\text{and}\qquad
\lVert g \rVert_{p/(p+1)}= \left( \int f(x)^{p/(p+1)} dx \right)^{(p+1)/p},
\]
respectively.   We use the above inequalities to estimate 
  \begin{equation}\label{eq:toEstimate}
    \frac{\mu(B(x,R))}{\mu(B(x,r))} = \frac{\lVert f \cdot \chi_{B(x,R)}\rVert_1}{\lVert f \cdot
    \chi_{B(x,R)}\rVert_1},
  \end{equation}
  where $f$ is the density of $\mu$ and $\chi_A$ is the indicator function associated with a set $A$.

 Fix $\theta\in(0,1)$ and let $r=R^{1/\theta}$.  Write $q_1\in (1,\infty)$ for the H\"older
 conjugate of $p_1$, that is the unique value satisfying $1/p_1+1/q_1 = 1$.    Noting that
 $\frac{\lVert f\rVert_{p_1}}{\lVert f \rVert_{-p_2}}  \in (0,\infty)$ is a constant independent of
 $R$, we can bound  \eqref{eq:toEstimate} from above by
  \begin{align*}
    \frac{\mu(B(x,R))}{\mu(B(x,r))} \leq \frac{\lVert f
    \rVert_{p_1}\lVert\chi_{B(x,R)}\rVert_{q_1}}{\lVert f \rVert_{-p_2}\lVert
    \chi_{B(x,r)}\rVert_{p_2/(1+p_2)}} 
    &\lesssim     \frac{\left(\int \chi_{B(x,R)}^{q_1}d\mu\right)^{1/q_1}}{\left(\int
    \chi_{B(x,r)}^{p_2/(1+p_2)}d\mu\right)^{1+1/p_2}}\\
    &\lesssim  \dfrac{R^{1-1/p_1}}{r^{1+1/p_2}} \\
    &=   \left(\frac{R}{r}\right)^{\frac{1-1/p_1-1/\theta(1+1/p_2)}{1-1/\theta}}.
  \end{align*}
Therefore,
\begin{equation*}
  \dimAt\mu \leq \frac{1-1/p_1-1/\theta(1+1/p_2)}{1-1/\theta}
  =1+\frac{p_1+\theta p_2}{p_1 p_2 (1-\theta)},
\end{equation*}
as required.

We can bound  \eqref{eq:toEstimate} from below similarly  by
  \begin{align*}
    \frac{\mu(B(x,R))}{\mu(B(x,r))} \geq \frac{\lVert f \rVert_{-p_2}\lVert
    \chi_{B(x,R)}\rVert_{p_2/(1+p_2)}}{\lVert f
    \rVert_{p_1}\lVert\chi_{B(x,r)}\rVert_{q_1}}
    &\gtrsim  \dfrac{R^{1+1/p_2}}{r^{1-1/p_1}}\\
    &=   \left(\frac{R}{r}\right)^{\frac{1+1/p_2-1/\theta(1-1/p_1)}{1-1/\theta}}.
  \end{align*}
Therefore,
\begin{equation*}
  \dimLt\mu \geq \frac{1+1/p_2-1/\theta(1-1/p_1)}{1-1/\theta}
  =1-\frac{\theta p_1+ p_2}{p_1 p_2 (1-\theta)},
\end{equation*}
as required.

Finally, note that if  $\mu\in L^{\infty} \cap
  L^{-\infty}$, then
\[
\|f\|_{-\infty}\cdot r \leq \mu(B(x,r)) \leq \|f\|_\infty\cdot r
\]
for all $x$ in the support of $\mu$ and all $r \in (0,1)$. Therefore $\mu$ is $1$-Ahlfors regular.
The fact that the estimates are sharp is proved  in the following  subsections.

\subsubsection{Sharpness for the Assouad spectrum.} \label{asssharp}

The following lemma  shows that  the estimate for the Assouad spectrum in Theorem \ref{thm:main} is
sharp for  all $\theta \in (0,1)$.  Moreover, this shows that a measure can belong to $L^{p_1} \cap
L^{-p_2}$ whilst being non-doubling (that is, $\dimA \mu = \infty$) and even have infinite
quasi-Assouad dimension.
\begin{lemma} \label{sharpp1}
The bounds on the Assouad spectrum in Theorem \ref{thm:main} are sharp.  That is, given $p_1,p_2 \in
(1,\infty]$ there exists a probability measure  $\mu$ such that $\mu \in L^{p_1'} \cap L^{-p_2'}$
for all   $p_1'<p_1$ and $p_2'<p_2$, and
\begin{equation*}
  \dimAt\mu    =1+\frac{p_1+\theta p_2}{p_1 p_2 (1-\theta)}
\end{equation*}
for all $\theta \in (0,1)$.
\end{lemma}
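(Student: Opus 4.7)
The plan is to construct $\mu$ as a countable superposition of basic ``spike-and-valley'' units whose parameters are tuned to a dense family of $\theta$-values in $(0,1)$. For concreteness I focus on $p_1,p_2\in(1,\infty)$; the cases with an infinite exponent are recovered by sending $p_1$ or $p_2$ to $\infty$ in the same construction. Enumerate $\mathbb{Q}\cap(0,1)=\{\theta_m\}_{m\geq 1}$ so that each rational appears infinitely often, and for each $(m,n)\in\mathbb{N}^2$ fix a scale $V_{m,n}>0$, for instance $V_{m,n}=2^{-C(m+n)/\theta_m}$ for a large constant $C$. The basic unit $U(V,\theta)$ will carry a spike interval of width $w=V^\theta$ with uniform density $h=V^{-\theta/p_1}$ placed immediately adjacent to a valley interval of width $V$ with uniform density $v=V^{1/p_2}$, and zero density elsewhere. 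I would place the units $U(V_{m,n},\theta_m)$ on pairwise disjoint subintervals of $[a,b]$, leaving enough room around each that a ball of radius $\asymp V_{m,n}^{\theta_m}$ centred in the valley cannot reach another unit, and then normalise to a probability measure.

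The $L^p$ book-keeping is straightforward. The unit $U(V_{m,n},\theta_m)$ contributes $h^{p_1'}w=V_{m,n}^{\theta_m(1-p_1'/p_1)}$ to $\int f^{p_1'}$ and $v^{-p_2'}V=V_{m,n}^{1-p_2'/p_2}$ to $\int f^{-p_2'}$; substituting the proposed scales these become $2^{-C(m+n)(1-p_1'/p_1)}$ and $2^{-C(m+n)(1-p_2'/p_2)/\theta_m}$, both summable in $(m,n)$ for every $p_1'<p_1$ and $p_2'<p_2$. At $p_1'=p_1$ or $p_2'=p_2$ each summand equals $1$ and the double sums diverge, so the smoothness is exactly borderline: $\mu\in L^{p_1'}\cap L^{-p_2'}$ for all $p_1'<p_1$ and $p_2'<p_2$, while neither endpoint is attained. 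The total mass is a convergent sum and can be renormalised.

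The heart of the argument is the lower bound $\dimAt\mu\geq 1+(p_1+\theta p_2)/(p_1p_2(1-\theta))=:s(\theta)$ for every $\theta\in(0,1)$. Fix such a $\theta$, pick any $\theta_m>\theta$ from the enumeration, let $x$ be the valley centre of $U(V_{m,n},\theta_m)$, and consider the scale pair $R=V_{m,n}^{\theta_m}$ and $r=R^{1/\theta}=V_{m,n}^{\theta_m/\theta}$. Since $\theta_m>\theta$ one has $r<V_{m,n}$ for large $n$, so $B(x,r)$ lies inside the valley with $\mu(B(x,r))\asymp rv=V_{m,n}^{\theta_m/\theta+1/p_2}$, while $B(x,R)$ covers the spike and its mass is dominated by $wh=V_{m,n}^{\theta_m(1-1/p_1)}$. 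A direct exponent computation then shows
\[
\frac{\mu(B(x,R))}{\mu(B(x,r))}\asymp\left(\frac{R}{r}\right)^{\alpha(\theta_m)},\qquad
\alpha(\theta_m)=1+\frac{\theta}{(1-\theta)p_1}+\frac{\theta}{(1-\theta)\theta_m p_2},
\]
and $\alpha(\theta_m)\uparrow s(\theta)$ as $\theta_m\downarrow\theta$. Letting $n\to\infty$ provides arbitrarily small scales showing $\dimAt\mu\geq\alpha(\theta_m)$, and then letting $\theta_m\downarrow\theta$ through the enumeration yields $\dimAt\mu\geq s(\theta)$. The matching upper bound $\dimAt\mu\leq s(\theta)$ follows from Theorem~\ref{thm:main} applied at any $p_1'<p_1,\,p_2'<p_2$ and then taking $p_1'\uparrow p_1$ and $p_2'\uparrow p_2$.

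I expect the main obstacle to be the simultaneous control of all three requirements: the scales $V_{m,n}$ must decay fast enough (particularly when $\theta_m$ is near zero, where the $L^{-p_2'}$ contribution of the valley is most delicate) so that both double sums converge for every admissible $(p_1',p_2')$, while the units must be placed with enough geometric separation that the balls appearing in the sharpness argument really see only one unit. The factor $1/\theta_m$ in the suggested $V_{m,n}$ is precisely what reconciles these competing demands.
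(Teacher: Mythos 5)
Your construction is far more elaborate than what is needed: the paper's proof uses a single two-piece power-law density on $[-1,1]$, namely $f(x)=Cx^{-1/p_1}$ for $x>0$ and $f(x)=C(-x)^{1/p_2}$ for $x<0$, and one family of witness points $x=-r$ with $r=R^{1/\theta}$, which realises the sharp exponent for \emph{every} $\theta\in(0,1)$ simultaneously with no enumeration of rationals. Your spike-and-valley units are essentially a discretisation of that density, and your exponent arithmetic for the lower bound $\dimAt\mu\geq\alpha(\theta_m)\uparrow s(\theta)$ is correct. However, there is a genuine gap in how you close the argument.

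The problem is the ``zero density elsewhere'' together with the deliberate gaps between units. First, under the paper's definition of $L^{-p}$ the set $\{x\in X: f(x)=0\}$ must be Lebesgue-null; with $X=[a,b]$ your density vanishes on a set of positive measure, so $\mu\notin L^{-p_2'}$ for \emph{any} $p_2'$, and the claimed smoothness fails. Second, even if one reads the norms relative to $\supp\mu$, your matching upper bound collapses: you invoke Theorem~\ref{thm:main}, but its proof bounds $\mu(B(x,r))$ from below via the reverse H\"older inequality, $\mu(B(x,r))\geq\lVert f\rVert_{-p_2}\lVert\chi_{B(x,r)}\rVert_{p_2/(1+p_2)}\approx\lVert f\rVert_{-p_2}\,r^{1+1/p_2}$, which requires $f>0$ almost everywhere on $B(x,r)$ (otherwise $\lVert f\rVert_{-p_2}=0$ and the inequality is vacuous). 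With gaps, balls centred at edge points or at accumulation points of your units meet regions of zero density, so the theorem does not apply and $\dimAt\mu\leq s(\theta)$ is unproved; depending on the placement of the units the ratio at such points could genuinely exceed the target exponent. Note that the paper's own lower-spectrum example avoids this precisely because its balls $B(x_i,2^{-(i+1)})=(2^{-i},2^{-i+1})$ tile $(0,1)$, so the density is positive a.e.\ on an interval. The fix is easy --- fill the gaps with a benign positive density (or arrange the units to tile an interval), which leaves both the $L^{p_1'}\cap L^{-p_2'}$ bookkeeping and the valley-centre lower bound intact --- but as written the proof is incomplete.
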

\begin{proof}
Let $p_1,p_2 \in (1,\infty]$ and let $\mu$ be the probability measure supported on $[-1,1]$ with density 
\[
f(x)=\begin{cases}Cx^{-1/p_1}&0<x\leq 1\\ C(-x)^{1/p_2} &-1\leq x \leq 0\end{cases},
\]
 where  $C$ is chosen such that $\int f(x)dx = 1$, see Figure \ref{fig:monoexample1}.  We adopt the
 natural convention that
 $1/\infty=0$.  It is easily checked that $f\in L^{p_1'} \cap L^{-p_2'}$ for $p_1'<p_1$ and
 $p_2'<p_2$, but that  $ f\notin L^{p_1''} \cap L^{-p_2''}$ if either $p_1''=p_1$ or $p_2''=p_2$.

In order to bound the Assouad  spectrum from below it suffices to find points such that the relative
measure of balls centred at that point  is large.  To this end, let $R \in (0,1)$, $r=R^{1/\theta}$
and consider the point $-r$, where we find
 \begin{align*}
  \frac{\mu(B(-r,R))}{\mu(B(-r,r))} = \frac{\int_{-r-R}^{0}(-x)^{1/p_2} dx + \int_{0}^{R-r}x^{-1/p_1} dx}{\int_{-2r}^{0}(-x)^{1/p_2} dx} &\approx  \frac{(R+r)^{1+1/p_2}   + (R-r)^{1-1/p_1}}{r^{1+1/p_2} } \\
&  \gtrsim  \frac{R^{1-1/p_1}}{r^{1+1/p_2}}\\
& \gtrsim  \left(\frac{R}{r}\right)^{\frac{1-1/p_1-1/\theta(1+1/p_2)}{1-1/\theta}}.
\end{align*}
This shows that
\begin{equation*}
  \dimAt\mu \geq \frac{1-1/p_1-1/\theta(1+1/p_2)}{1-1/\theta}
  =1+\frac{p_1+\theta p_2}{p_1 p_2 (1-\theta)}
\end{equation*}
and
\[
\dimqA \mu = \dimA \mu = \infty.
\]
In fact, applying Theorem \ref{thm:main} to this example we see that
\begin{equation*}
  \dimAt\mu    =1+\frac{p_1+\theta p_2}{p_1 p_2 (1-\theta)}
\end{equation*}
for all $\theta \in (0,1)$.
\end{proof}

\begin{figure}[ht]
  \begin{center}\includegraphics[width=0.9\textwidth]{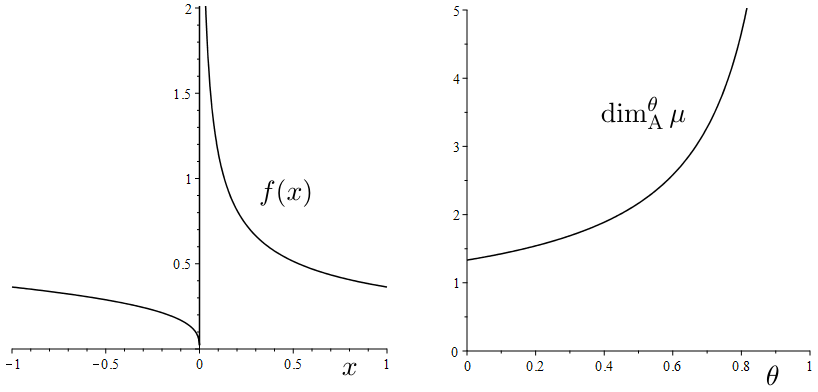}\end{center}
  \caption{A sharp example for the Assouad spectrum, with $p_1=2$,  $p_2 = 3$  and $C=4/11$.  The density is plotted on the left and the Assouad spectrum is plotted on the right.}
  \label{fig:monoexample1}
\end{figure}

\subsubsection{Sharpness for the lower spectrum.} \label{sharpsec2}

The following lemma  shows that  the estimate for the lower spectrum in Theorem \ref{thm:main} is
sharp for  all $\theta \in (0,1)$.  Moreover, this shows that a measure can belong to $L^{p_1} \cap
L^{-p_2}$ whilst being non-inverse doubling (that is, $\dimL \mu = 0$) and even have  quasi-lower
dimension equal to 0.
\begin{lemma} \label{sharpp2}
The bounds on the lower spectrum in Theorem \ref{thm:main} are sharp.  That is, given $p_1,p_2 \in
(1,\infty]$ there exists a probability measure  $\mu$ such that $\mu \in L^{p_1'} \cap L^{-p_2'}$
for all   $p_1'<p_1$ and $p_2'<p_2$, and 
\begin{equation*}
\dimLt \mu = \max\left\{ 1-\frac{\theta p_1+  p_2}{p_1 p_2 (1-\theta)}, \ 0 \right\}
\end{equation*}
for all $\theta \in (0,1)$.
\end{lemma}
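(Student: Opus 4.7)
We follow the template of Lemma \ref{sharpp1}: we shall construct a specific measure $\mu$ realising the claimed upper bound on $\dimLt\mu$, and then invoke Theorem \ref{thm:main} (together with the trivial bound $\dimLt\mu\ge 0$) for the matching lower bound. The essential new difficulty is that a monomial density cannot suffice here. Writing $\alpha^*(\theta):=1-(\theta p_1+p_2)/(p_1p_2(1-\theta))$, attaining $\dimLt\mu\le\alpha^*(\theta)$ requires points $x_0$ at which $\mu(B(x_0,r))\sim r^{1-1/p_1}$ at small scales $r$ (spike behaviour) while $\mu(B(x_0,R))\sim R^{1+1/p_2}$ at larger scales $R$ (vanishing behaviour). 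Since these profiles are incompatible at a single point in a single scale, the density must display different shapes on a geometric sequence of scales accumulating at $x_0$.

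Set $\theta^*:=(1-1/p_1)/(1+1/p_2)$, the value at which $\alpha^*$ vanishes, and define the probability measure $\mu$ on $[0,1]$ with density
\[
f(x)=C\left(x^{1/p_2}+\sum_{k=1}^\infty|x-c_k|^{-1/p_1}\chi_{B(c_k,w_k)}(x)\right),
\]
where $c_k=2^{-k}$, $w_k=c_k^{1/\theta^*}$, and $C>0$ is chosen so that $\int f\,dx=1$. Since $1/\theta^*>1$, we have $w_k<c_k/2$ for large $k$, so the spike neighbourhoods are pairwise disjoint. The geometric sum $\sum_k w_k^{1-p_1'/p_1}$ converges for $p_1'<p_1$ and diverges at $p_1'=p_1$, so $f\in L^{p_1'}\setminus L^{p_1}$. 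Dually, $f\ge Cx^{1/p_2}$ on all of $[0,1]$ while $f$ agrees with $Cx^{1/p_2}$ on the (substantial) gaps between spikes, so $f\in L^{-p_2'}\setminus L^{-p_2}$ for every $p_2'<p_2$.

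The choice $w_k=c_k^{1/\theta^*}$ is calibrated so that $w_k^{1-1/p_1}=c_k^{1+1/p_2}$, which makes the $k$th spike have integrated mass $m_k\sim c_k^{1+1/p_2}$. Direct integration then yields three regimes for $\mu(B(c_k,R))$: the spike scaling $m_k(R/w_k)^{1-1/p_1}$ for $R\le w_k$; a plateau of height $\sim m_k$ for $R\in[w_k,c_k/2]$; and (via accumulation over deeper spikes) $\mu(B(c_k,R))\sim R^{1+1/p_2}$ for $R\ge c_k$. Fix $\theta\in(0,1)$ and take $R=c_k/2$ and $r=R^{1/\theta}$. If $\theta\le\theta^*$, then $r<w_k$ for large $k$, so $\mu(B(c_k,R))/\mu(B(c_k,r))\sim(w_k/r)^{1-1/p_1}$; substituting $w_k=c_k^{1/\theta^*}$, $R=c_k/2$, $r=(c_k/2)^{1/\theta}$ and using the identity $(1-1/p_1)/\theta^*=1+1/p_2$ shows this quantity is $\sim(R/r)^{\alpha^*(\theta)}$, yielding $\dimLt\mu\le\alpha^*(\theta)$. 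If instead $\theta>\theta^*$, both $r$ and $R$ lie in the plateau range $(w_k,c_k/2)$ for large $k$ and the ratio stays bounded in $k$, so sending $k\to\infty$ gives $\dimLt\mu\le 0=\max(\alpha^*,0)$.

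The main technical task is ensuring that the $x^{1/p_2}$ background perturbs these ball-mass estimates only by bounded multiplicative constants. This amounts to checking $\int_{B(c_k,R)}x^{1/p_2}\,dx\lesssim m_k$ for $R\le c_k/2$, which is immediate from $c_k^{1/p_2}R\le c_k^{1+1/p_2}=m_k$, combined with the non-overlap of the spike neighbourhoods already noted. With this bookkeeping in place, the exponents match $\alpha^*(\theta)$ on the nose, and Theorem \ref{thm:main} supplies the reverse inequality $\dimLt\mu\ge\max(\alpha^*(\theta),0)$, completing the proof.
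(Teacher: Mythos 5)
Your proposal is correct, and the overall strategy is the one the paper uses: build an explicit measure whose mass concentrates on a geometric sequence of high-density ``spikes'' sitting on a low-density background accumulating at a point, show the ratio $\mu(B(x,R))/\mu(B(x,r))$ is as small as possible along that sequence, and then get the matching inequality $\dimLt\mu\ge\max\{\alpha^*(\theta),0\}$ from Theorem \ref{thm:main} plus the trivial bound $\dimLt\mu\ge 0$. The constructions differ in an interesting way, though. The paper's spikes are two-level step functions: the spike at $x_i$ has inner radius $2^{-(i+1)/\theta_i}$, where $(\theta_i)$ enumerates $\Q\cap(0,1)$ with each rational repeated infinitely often, so each spike is calibrated to exactly one value of $\theta$; the upper bound is first obtained only for rational $\theta$ and then extended to all $\theta\in(0,1)$ by continuity of the lower spectrum. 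Your power-law spikes $|x-c_k|^{-1/p_1}$ of universal width $w_k=c_k^{1/\theta^*}$ on the background $x^{1/p_2}$ exhibit the full range of scaling behaviours at every spike simultaneously (spike regime for $r\le w_k$, plateau for $r\in[w_k,c_k/2]$), so a single sequence of centres and radii works for every $\theta$ at once; this removes both the enumeration of the rationals and the appeal to continuity of the spectrum, and it makes the phase transition at $\theta^*=(1-1/p_1)/(1+1/p_2)$ visible directly in the geometry. The price is slightly more bookkeeping, which you have essentially done: one should also note that $B(c_k,c_k/2)$ picks up at most a bounded-constant contribution from the fragment of the neighbouring spike at $c_{k+1}=c_k/2$, and that for $p_1=\infty$ or $p_2=\infty$ the conventions $|x-c_k|^0=1$ and $x^0=1$ reduce your density to a bounded, respectively bounded-below, function so the integrability claims still hold. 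With those routine checks your argument is complete and, if anything, a little cleaner than the paper's.
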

\begin{proof}
Let $p_1,p_2 \in (1,\infty]$ and $\theta_i$ ($i \geq 1$) be an enumeration of $\Q\cap(0,1)$ such
that for every rational $q\in\Q\cap(0,1)$ there are
infinitely many $i\in\N$ such that $\theta_i = q$. Let $x_i = 2^{-i}+2^{-(i+1)}$ and $\mu$ be the
probability measure supported on $[0,1]$ with density 
\[
f(x)=\begin{cases}C2^{i/(\theta_ip_1)}& x \in B(x_i, 2^{-(i+1)/\theta_i})  \\ 
C2^{-i/p_2} &x \in B(x_i, 2^{-(i+1)} ) \setminus B(x_i, 2^{-(i+1)/\theta_i})  \\
0 & \textup{otherwise} \end{cases},
\]
 where  $C$ is chosen such that $\int f(x)dx = 1$ and the balls are assumed to be open, see Figure
 \ref{fig:monoexample2}.  We adopt the natural convention that $1/\infty=0$.  Moreover, by
 construction, the balls $B(x_i, 2^{-(i+1)} )$ are pairwise disjoint subsets of $[0,1]$ and so $f$ is
 well-defined.

There exists a constant $c>0$ such that $c^{-1}x^{1/p_2} \leq f(x)$ and therefore  $\mu \in
L^{-p_2'}$ for  $p_2'<p_2$. Moreover, for $p_1'<p_1$ we have
\[
\int f(x)^{p_1'} dx \lesssim \sum_i 2^{ip_1'/(\theta_ip_1)}2^{-i/\theta_i} \leq \sum_i 2^{i(p_1'/p_1-1)} < \infty
\]
and therefore  $\mu \in  L^{p_1'}$.

In order to bound the lower  spectrum from above it suffices  to find points such that the relative
measure of balls centred at that point  is small.  To this end, fix $\theta \in \Q \cap (0,1)$ and a
subsequence of the $\theta_i$, which we denote by  $\theta_k$, such that $\theta_k = \theta$ for all
$k$.  Along this sequence, let $R_k = 2^{-(k+1)}$, $r_k=R_k^{1/\theta}$ and consider the points
$x_k=2^{-k}+2^{-(k+1)}$, where we find
\begin{equation} \label{lsharp}
  \frac{\mu(B(x_k,R_k))}{\mu(B(x_k,r_k))} \lesssim  \frac{r_k 2^{k/(\theta p_1)}+ R_k2^{-k/p_2}}{r_k 2^{k/(\theta p_1)}}  \approx \frac{R_k^{1/\theta-1/(\theta p_1)}+ R_k^{1+1/p_2}}{R_k^{1/\theta-1/(\theta p_1)}}.
\end{equation}
Provided $1/\theta-1/(\theta p_1)> 1+1/p_2$ this gives an upper bound of
\[
\lesssim  \frac{R_k^{1+1/p_2}}{R_k^{1/\theta-1/(\theta p_1)}} = \left(\frac{R_k}{r_k} \right)^{\frac{1+1/p_2-1/\theta+1/(\theta p_1)}{1-1/\theta}}
\]
for \eqref{lsharp}. This shows
\[
\dimLt \mu \leq \frac{1+1/p_2-1/\theta+1/(\theta p_1)}{1-1/\theta} =  1-\frac{\theta p_1+  p_2}{p_1 p_2 (1-\theta)}.
\]
On the other hand, if $1/\theta-1/(\theta p_1)\leq 1+1/p_2$, this gives an upper bound of $\lesssim
1$ for \eqref{lsharp}, which implies $ \dimL \mu \leq 0$.  Putting these two cases together we get
\[
\dimLt \mu \leq \max\left\{ 1-\frac{\theta p_1+  p_2}{p_1 p_2 (1-\theta)}, \ 0 \right\}
\]
for all rational $\theta \in (0,1)$.  Since the lower spectrum is continuous in $\theta \in (0,1)$,
we therefore conclude this upper bound for all $\theta \in (0,1)$.  Moreover, we get
\[
\dimqL \mu = \dimL \mu = 0.
\]
In fact, applying Theorem \ref{thm:main} we get
\[
\dimLt \mu = \max\left\{ 1-\frac{\theta p_1+  p_2}{p_1 p_2 (1-\theta)}, \ 0 \right\}
\]
for all $\theta \in (0,1)$.\end{proof}

\begin{figure}[ht]
  \begin{center}\includegraphics[width=0.9\textwidth]{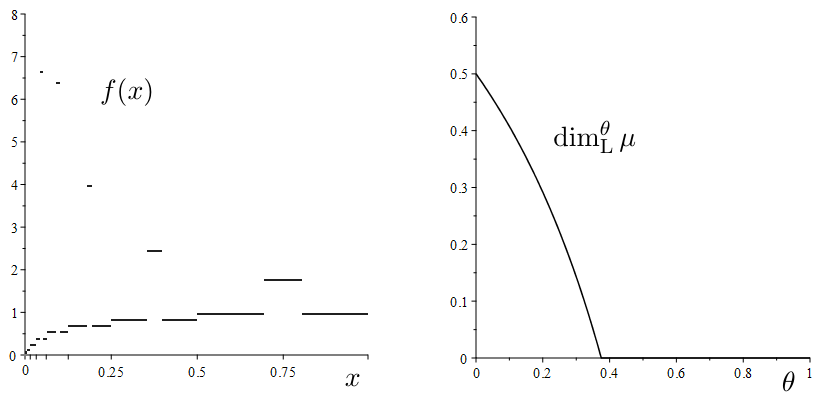}\end{center}
  \caption{A sharp example for the lower spectrum, with $p_1=2$ and $p_2 = 3$.  The density is plotted on the left and the Assouad spectrum is plotted on the right.  The density is not drawn to scale and is mostly for illustrative purposes. }
  \label{fig:monoexample2}
\end{figure}

\section{Piecewise monotonic densities.}

Given that the Assouad and lower spectra are dual notions, it is quite striking how different the
examples in the previous section are.  In particular, the measure exhibiting sharpness of the
Assouad spectrum bound in  Theorem \ref{thm:main} has a  piecewise monotonic density (Section
\ref{asssharp} and Figure \ref{fig:monoexample1}), whereas the  measure exhibiting sharpness of the
lower spectrum bound does not, and is rather more complicated to construct (Section \ref{sharpsec2}
and Figure \ref{fig:monoexample2}).  This turns out to be no coincidence.  Here, and in what
follows,  piecewise  means with  \emph{finitely} many pieces.

\begin{theorem}
  \label{thm:mono}
 Suppose $p_1, p_2 \in [1,\infty)$ are such that $\mu\in L^{p_1} \cap
  L^{-p_2}$.  If    $\mu$ has a  monotonic density, then
\[
    \dimAt \mu \leq \max \left\{ 1+\frac{1}{p_2 (1-\theta)} , \ 1+\frac{ \theta}{p_1  (1-\theta)}  \right\}
\]
and
\[
    \dimLt \mu \geq \min\left\{ 1-\frac{\theta }{p_2 (1-\theta)} , \ 1-\frac{1}{p_1  (1-\theta)}\right\} .
\]
 If    $\mu$ has a piecewise monotonic density, then
\[
  \dimAt \mu \leq 1+\frac{p_1 + \theta p_2}{p_1 p_2 (1-\theta)} 
\]
and
\[
    \dimLt \mu \geq \min\left\{ 1-\frac{\theta }{p_2 (1-\theta)} , \ 1-\frac{1}{p_1  (1-\theta)}\right\}.
\]
Moreover, all of these bounds are sharp.
\end{theorem}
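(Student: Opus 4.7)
The plan is to exploit a basic rigidity property of monotonic functions: Chebyshev's inequality combined with monotonicity converts the global integral norms $\|f\|_{p_1}$ and $\|f\|_{-p_2}$ into pointwise control on $f$. Specifically, if $f$ is monotonically decreasing on $[0,1]$ with $f \in L^{p_1} \cap L^{-p_2}$, then
\[
  f(s) \leq \|f\|_{p_1}\, s^{-1/p_1}
  \qquad\text{and}\qquad
  f(s) \geq \|f\|_{-p_2}\, (1-s)^{1/p_2}
\]
for every $s \in (0,1)$; the first follows from $\int_0^s f^{p_1} \geq s f(s)^{p_1}$ since $f(t) \geq f(s)$ for $t \leq s$, and the second from the analogous bound on $1/f$. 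Increasing densities are handled by reflection. Together with the envelope inequalities $2r f(x+r) \leq \mu(B(x,r)) \leq 2r f(x-r)$ (and likewise at radius $R$), these pointwise bounds yield
\[
\frac{\mu(B(x,R))}{\mu(B(x,r))} \;\lesssim\; \frac{R\,(x-R)^{-1/p_1}}{r\,(1-x-r)^{1/p_2}}
\]
whenever $B(x,R) \subseteq [0,1]$. The crucial difference from the proof of Theorem~\ref{thm:main} is that the two singular factors now appear at \emph{opposite} endpoints of the support rather than combining at a single point.

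A case analysis on the position of $x$ then produces the monotonic Assouad spectrum bound. When $x$ lies within $2R$ of the left endpoint, a direct H\"older estimate on $[0,x+R]$ gives $\mu(B(x,R)) \lesssim R^{1-1/p_1}$ while $\mu(B(x,r)) \gtrsim r$ (the factor $(1-x-r)^{1/p_2}$ being bounded away from $0$), producing exponent $1 + \theta/(p_1(1-\theta))$ in $R/r$. Symmetrically, when $x+r$ lies within $r$ of the right endpoint, the reverse H\"older bound gives $\mu(B(x,r)) \gtrsim r^{1+1/p_2}$ and a monotonic estimate gives $\mu(B(x,R)) \lesssim R$, producing exponent $1 + 1/(p_2(1-\theta))$; for interior $x$ both factors are bounded and the trivial exponent $1$ appears. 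The sup of these is the claimed bound. The monotonic lower spectrum lower bound is proved identically, using the dual envelopes $\mu(B(x,R)) \geq 2R f(x+R)$ and $\mu(B(x,r)) \leq 2r f(x-r)$ and interchanging the roles of the H\"older and reverse H\"older bounds, so that the two boundary cases now produce the exponents $1 - 1/(p_1(1-\theta))$ and $1 - \theta/(p_2(1-\theta))$ respectively and their minimum is the claimed bound.

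For piecewise monotonic densities the upper bound on $\dim_{\mathrm{A}}^\theta \mu$ is inherited unchanged from Theorem~\ref{thm:main}, and sharpness is immediate because the density in Lemma~\ref{sharpp1} is already piecewise monotonic with two pieces. The genuinely new content is the improved lower spectrum bound: once $R$ is smaller than half the length of the shortest monotonic piece, any ball $B(x,R)$ meets at most two adjacent pieces $I_k, I_{k+1}$, and the monotonic pointwise estimates apply locally using the restricted norms $\|f|_{I_k}\|_{p_1} \leq \|f\|_{p_1}$ and $\|f|_{I_k}\|_{-p_2} \geq \|f\|_{-p_2}$. A short subcase analysis on whether $B(x,r)$ sits in a single piece or straddles a junction reproduces the monotonic bound in each case; the essential point is that even when the smaller ball crosses a junction, the global H\"older estimate $\mu(B(x,r)) \lesssim r^{1-1/p_1}$ is matched by $\mu(B(x,R)) \gtrsim R$ coming from whichever of the two pieces contains at least half of $B(x,R)$. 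Sharpness of the remaining bounds is established by explicit monotonic densities built from truncated power laws tuned so that the Chebyshev pointwise estimates are saturated at the relevant scale; I expect the main technical obstacle to be that the two extremes in each $\max$ (resp. $\min$) are realised by \emph{different} configurations, so each term of each bound requires its own construction rather than a single universal example.
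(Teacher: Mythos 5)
Your treatment of the monotonic case is sound and is essentially a quantitative repackaging of the paper's argument: the paper observes that, for all sufficiently small $R$, a monotone $f$ restricted to $B(x,R)$ is either uniformly small, uniformly large, or comparable to a constant, and in each case replaces one of the two H\"older estimates from Theorem \ref{thm:main} by a trivial bound; your Chebyshev inequalities $f(s)\le \|f\|_{p_1}s^{-1/p_1}$ and $f(s)\ge \|f\|_{-p_2}(1-s)^{1/p_2}$ encode the same dichotomy positionally. (One small caveat: in the intermediate range $2R\le x\le \mathrm{const}$ the factor $(x-R)^{-1/p_1}$ is not bounded, only $\lesssim R^{-1/p_1}$, so your ``interior'' case does not give exponent $1$ there but collapses into the left-boundary exponent; this is harmless since you take a maximum.) Your identification of the piecewise Assouad bound with Theorem \ref{thm:main}, and of its sharpness with the example of Lemma \ref{sharpp1}, is also correct.

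The gap is in the piecewise lower-spectrum bound, which you yourself identify as the genuinely new content. The stated mechanism --- ``$\mu(B(x,R))\gtrsim R$ coming from whichever of the two pieces contains at least half of $B(x,R)$'' --- is false: monotonicity of $f$ on a piece says nothing about its size there. Take $f(y)=C|y-c|^{1/p_2}$ near a junction $c$ (a V-shaped density vanishing at $c$): each piece is monotonic and half of $B(c,R)$ lies in a single piece, yet $\mu(B(c,R))\approx R^{1+1/p_2}\ll R$. The correct dichotomy, and the one the paper uses, is on the level sets of $f$ rather than on the monotonicity pieces: for small $R$ write $B(x,R)=I_1\cup I_2$ with $f\le 1$ on $I_1$ and $f\ge 1$ on $I_2$. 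If $B(x,r)\subseteq I_1$, then $\mu(B(x,r))\lesssim r$ trivially while the reverse H\"older inequality still gives $\mu(B(x,R))\gtrsim R^{1+1/p_2}$, yielding the exponent $1-\theta/(p_2(1-\theta))$; otherwise $I_2$ contains an interval of length $R-r\gtrsim R$ on which $f\ge 1$ \emph{by definition of $I_2$}, so $\mu(B(x,R))\gtrsim R$ genuinely, and H\"older gives $\mu(B(x,r))\lesssim r^{1-1/p_1}$, yielding $1-1/(p_1(1-\theta))$. Conditioning on which piece contains the small ball cannot produce this second alternative, so as written your subcase analysis does not close.

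On sharpness, your expectation that each branch of each $\max$ and $\min$ requires a separate construction is unnecessary: the single monotone decreasing density $f(x)=Cx^{-1/p_1}$ on $(0,1]$ and $f(x)=C(2-x)^{1/p_2}$ on $(1,2]$ realises both branches of both bounds simultaneously, with phase transitions in $\theta$, and closes the whole theorem.
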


\subsection{Proof of Theorem \ref{thm:mono}.}

\subsubsection{Establishing the bounds.}

We begin with the case when $\mu$ has a monotonic density, which we denote by $f$.   It follows that for all sufficiently small $R>0$ (depending on $f$) and all $x$, at least one of the following is satisfied:
\begin{enumerate}
\item  $f$ is  bounded above  by 2 on $B(x,R)$ 
\item  $f$ is  bounded below  by 2 on $B(x,R)$ 
\item  $f$ is bounded above by 3 and below by 1 on $B(x,R)$ 
\end{enumerate}
In each of these cases we follow the proof of Theorem \ref{thm:main} but we can obtain better estimates.  In Case 1 we have
\[
    \frac{\mu(B(x,R))}{\mu(B(x,r))} \leq \frac{2\lVert\chi_{B(x,R)}\rVert_{1}}{\lVert f \rVert_{-p_2}\lVert
    \chi_{B(x,r)}\rVert_{p_2/(1+p_2)}} 
        \lesssim  \frac{R}{r^{1+1/p_2}}  =   \left(\frac{R}{r}\right)^{\frac{1-1/\theta(1+1/p_2)}{1-1/\theta}},
\]
in Case 2 we have
\[
     \frac{\mu(B(x,R))}{\mu(B(x,r))} \leq \frac{\lVert f
    \rVert_{p_1}\lVert\chi_{B(x,R)}\rVert_{q_1}}{(1/2)\lVert
    \chi_{B(x,r)}\rVert_{1}} 
    \lesssim  \dfrac{R^{1-1/p_1}}{r} \\
    =   \left(\frac{R}{r}\right)^{\frac{1-1/p_1-1/\theta}{1-1/\theta}},
\]
and in Case 3
\[
    \frac{\mu(B(x,R))}{\mu(B(x,r))} \leq \frac{3\lVert\chi_{B(x,R)}\rVert_{1}}{ \lVert
    \chi_{B(x,r)}\rVert_{1}} 
        \lesssim  \frac{R}{r}.
\]
The estimate in the theorem follows. The lower spectrum case is similar and omitted.

We now consider  the case when $\mu$ has a piecewise monotonic density, which we denote by $f$. This is similar to the monotonic case, but an interesting phenomenon happens allowing us to improve  the general estimate for the lower spectrum in a way we cannot for the Assouad spectrum.

The upper bound for the Assouad spectrum is provided by Theorem \ref{thm:main} and the fact that this is sharp is shown by the example in Section \ref{asssharp}.  Therefore we may consider only the lower spectrum.  Since $f$ is piecewise monotonic it follows  that for all sufficiently small $R>0$ (depending on $f$) and all $x$,    at least one of the following is satisfied:
\begin{enumerate}
\item  $f$ is  bounded above  by 2 on $B(x,R)$ 
\item  $f$ is  bounded below  by 2 on $B(x,R)$ 
\item  $f$ is bounded above by 3 and below by 1 on $B(x,R)$ 
\item $B(x,R)$ can be written as a disjoint union of two intervals $I_1 \cup I_2$ such that $f$ is bounded above by 1 on $I_1$ and below by $1$ on $I_2$.  One of the intervals $I_1$ or $I_2$ may be empty and they can be closed, open or half open.  
\end{enumerate}
In each of these cases we follow the proof of Theorem \ref{thm:main} but we can obtain better estimates.  Cases 1-3 are covered above.  In Case 4, either $B(x,r) \subseteq I_1$ or not.  If $B(x,r) \subseteq I_1$, then
\[
    \frac{\mu(B(x,R))}{\mu(B(x,r))} \geq \frac{\lVert f \rVert_{-p_2}\lVert
    \chi_{B(x,R)}\rVert_{p_2/(1+p_2)}}{ \lVert\chi_{B(x,r)}\rVert_{1}}
    \gtrsim   \dfrac{R^{1+1/p_2}}{r} 
    =  \left(\frac{R}{r}\right)^{\frac{1+1/p_2-1/\theta}{1-1/\theta}}.
\]
  If $B(x,r)$ is not completely contained inside  $I_1$, then there must be an interval of length $R-r \gtrsim R$ contained in $I_2$ in which case
\[
    \frac{\mu(B(x,R))}{\mu(B(x,r))} \geq \frac{\mu(I_2)}{\mu(B(x,r))} \geq \frac{ \lVert
    \chi_{I_2}\rVert_{1}}{\lVert f
    \rVert_{p_1}\lVert\chi_{B(x,r)}\rVert_{q_1}}
    \gtrsim    \dfrac{R }{r^{1-1/p_1}}
     =   \left(\frac{R}{r}\right)^{\frac{1 -1/\theta(1-1/p_1)}{1-1/\theta}}.
\]
The estimate in the theorem follows. 

\subsubsection{Sharpness.}

It remains to show that the estimates in Theorem \ref{thm:mono} are sharp.  This  requires only one
further example, where $\mu$ is the measure on $[0,2]$ with density
\[
f(x)=\begin{cases}Cx^{-1/p_1}&0<x\leq 1 \\
 C(2-x)^{1/p_2} &1< x \leq 2\end{cases},
\]
where  $C$ is chosen such that $\int f(x)dx = 1$.  Minor adaptations of the above arguments yield
\[
    \dimAt \mu = \max \left\{ 1+\frac{1}{p_2 (1-\theta)} , \ 1+\frac{ \theta}{p_1  (1-\theta)}  \right\}
\]
and
\[
    \dimLt \mu = \max\left\{ \min\left\{ 1-\frac{\theta }{p_2 (1-\theta)} , \ 1-\frac{1}{p_1
    (1-\theta)}\right\},  \  0  \right\} .
\]
as required.

Note that for this family of sharp examples, the Assouad spectrum will exhibit a phase transition at
$\theta = p_1/p_2$ provided $p_1<p_2$ and the lower spectrum is constantly equal to 0 for
\[
\theta>\min\left\{\frac{p_2}{1+p_2}, \frac{p_1-1}{p_1}\right\}.
\]

\begin{figure}[ht]
  \begin{center}\includegraphics[width=0.9\textwidth]{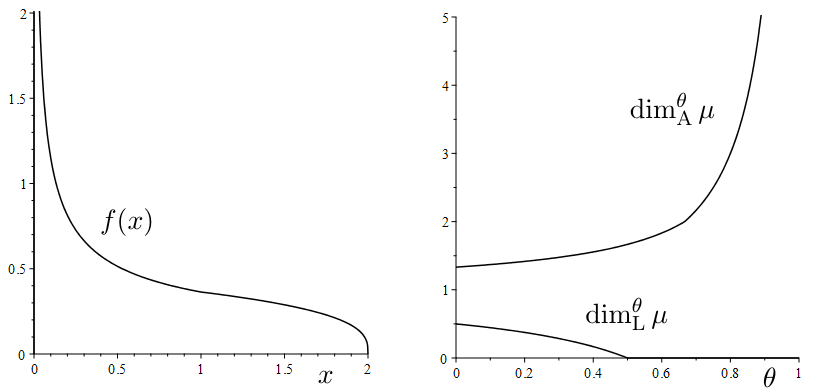}\end{center}
  \caption{A sharp example in the monotonic case, where  $p_1=2, p_2 = 3$ and $C=4/11$.  The density
  is plotted on the left and the Assouad and lower spectra are plotted on the right.  Note that the
Assouad spectrum has a phase transition at $\theta = 2/3$ and the lower spectrum has a phase
transition at $\theta =1/2$.}
  \label{fig:monoexample3}
\end{figure}

\section{A relationship in the opposite direction?}

So far we have proved results of the form: \emph{if a measure is  smooth, then it is also regular}.
In this section we investigate the reverse phenomenon and discover that such a concrete connection
is not possible.  

\subsection{A measure with Assouad dimension $1$ but only $L^1$ smoothness.}

Our first result in this direction shows that the strongest possible assumption on the Assouad
dimension of a measure yields no information about its smoothness.
\begin{theorem}
There exists a compactly supported measure $\mu \in L^1$ with $\dimA \mu = 1$  but which is not in
$L^p$ for $p>1$. 
\end{theorem}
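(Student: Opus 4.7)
The plan is to construct $\mu$ as an absolutely continuous measure on $[0,1]$ whose density is a sparse ``comb'' of very narrow, very tall bumps placed at geometrically spaced points, with geometrically decaying masses. The geometric mass arrangement will keep $\mu$ comparable to Lebesgue measure at every scale exceeding the bump widths (so $\dimA\mu \leq 1$), while the extreme narrowness of the bumps will make $\|f\|_p$ infinite for every $p > 1$.

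Concretely, for each $n \geq 1$ set $x_n = 2^{-n}$, $w_n = 2^{-2^n}$, $I_n = [x_n - w_n/2,\, x_n + w_n/2]$, and define the density
\[
f \;=\; \sum_{n \geq 1} h_n\, \chi_{I_n}, \qquad h_n \;=\; 2^{\,2^n-n},
\]
so that $\mu(I_n) = h_n w_n = 2^{-n}$. The $I_n$ are pairwise disjoint subintervals of $(0,1)$ since $w_n \ll 2^{-n-1}$, and $\sum_n 2^{-n} = 1$, so $\mu$ is a Borel probability on $[0,1]$. Since $h_n^p w_n = 2^{2^n(p-1) - np}$ and $2^n(p-1)$ eventually dominates $np$ for any $p > 1$,
\[
\int f^p\, dx \;=\; \sum_{n \geq 1} 2^{\,2^n(p-1) - np} \;=\; \infty,
\]
so $\mu \notin L^p$ for every $p > 1$.

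The inequality $\dimA\mu \geq 1$ is automatic from absolute continuity: at any Lebesgue point $x$ with $f(x) > 0$, $\mu(B(x,R))/\mu(B(x,r)) \to R/r$ as $r,R \to 0$, ruling out any smaller exponent. For $\dimA\mu \leq 1$ the key auxiliary fact is that the cumulative distribution $F(r) := \mu([0,r])$ satisfies $r \leq F(r) \leq 4r$ on $(0,1]$; this follows from $F(2^{-k}) = \sum_{n \geq k} 2^{-n} = 2 \cdot 2^{-k}$ together with monotonicity. Thus $\mu$ is comparable to Lebesgue measure at every scale exceeding the width of the enclosing bump, and inside any single bump $f$ is constant, so doubling there is immediate.

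The main technical obstacle is the transitional regime in which $r$ and $R$ straddle the radius at which $B(x,\cdot)$ first absorbs some new bump $I_k$: in such a transition $R/r$ can be made as close to $1$ as $1 + w_k/r$ (essentially $1$) while $\mu$ gains the mass $m_k = 2^{-k}$. The geometric mass choice saves the argument via the identity $\sum_{j > k} 2^{-j} = 2^{-k}$: by the time the ball centred at $x_n$ reaches $I_k$ from one side it has already fully absorbed every bump closer to $x_n$ (which together have mass exactly $\sum_{j > k} m_j = m_k$ in the case $k < n$, and at least $m_n \geq m_k$ in the case $k > n$), so each such transition multiplies $\mu(B(x_n,\cdot))$ by at most $2$, uniformly in $n$ and $k$. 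Combining this factor-of-$2$ control at transitions with the $F(r) \asymp r$ estimate at non-transitional scales yields $\mu(B(x,R)) \leq C \,(R/r)\, \mu(B(x,r))$ for a universal constant $C$, uniformly over $x \in \supp\mu$ and $0 < r < R < 1$; this gives $\dimA \mu \leq 1$. I would carry out the case analysis separately for $x = 0$ (handled directly by $F$), for $x$ inside some $I_n$ (two-sided analogue of $F$ around $x_n$), and for $x$ at an edge of some $I_n$.
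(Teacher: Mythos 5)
Your construction is correct, and the outline of the Assouad estimate does go through: for $x\in \overline{I_n}$ one has $\mu(B(x,\rho))\approx h_n\rho$ for $\rho\le w_n$, $\mu(B(x,\rho))= 2^{-n}$ for $w_n\le\rho\le 2^{-n-2}$, and $\mu(B(x,\rho))\approx\rho$ for $\rho\ge 2^{-n-2}$ (the last from $r\le F(r)\le 4r$ and its two-sided analogue), and checking the handful of resulting cases for $r<R$ gives $\mu(B(x,R))\lesssim (R/r)\,\mu(B(x,r))$ uniformly, with $x=0$ handled directly by $F$. The core device --- tall narrow bumps at $2^{-n}$ carrying mass $2^{-n}$, so that the measure is comparable to Lebesgue at all scales above the bump widths --- is exactly the paper's: for each $p\in(1,1.5]$ the paper builds $\mu_p$ with bumps at $2^{-k}$ of mass $2^{-k}$ whose widths are tuned so that $\mu_p\notin L^p$ while $\mu_p\in L^{p'}$ for $p'<p$. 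Where you genuinely diverge is that the paper then needs a second stage, gluing super-exponentially rescaled copies $\mu=\sum_k 2^{-k}T_k(\mu_{1+2^{-k}})$ to defeat every $p>1$ simultaneously, and the verification that this glued measure still has $\dimA\mu=1$ is only sketched there (by analogy with $\sum_k 2^{-k}\delta_{2^{-k}}$). Your super-exponential widths $w_n=2^{-2^n}$ make $\int f^p=\sum_n 2^{2^n(p-1)-np}$ diverge for \emph{every} $p>1$ in one stroke, so a single explicit density suffices and the gluing step disappears; this costs nothing, since the Assouad estimate never uses the widths beyond $w_n\le 2^{-n-2}$. Two trivial slips to fix when writing it up: $F(2^{-k})=\tfrac{3}{2}\cdot 2^{-k}$ rather than $2\cdot 2^{-k}$ (only half of $I_k$ lies in $[0,2^{-k}]$), and the bound $\dimA\mu\ge 1$ is most cleanly justified by noting that $f$ is constant and positive on the interior of $I_1$, so there the ratio equals $R/r$ exactly at small scales.
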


\begin{proof}
Let $p \in (1,1.5]$ and $\mu_p$ be the measure supported on a subset of $[0,1]$ with density
\[
f_p(x)=\begin{cases}2^{-1}\left(2^{k}/k\right)^{1/(p-1)} &x \in B\left(2^{-k},
  \left(k2^{-pk}\right)^{1/(p-1)} \right) \\
0& \textup{otherwise}\end{cases}.
\]
 This is  well-defined since the balls $B\left(2^{-k}, \left(k2^{-pk}\right)^{1/(p-1)} \right) $ are
 pairwise disjoint and a probability measure since
\[
\int f_p(x)dx  = \sum_{k=1}^\infty \left(2^{k}/k\right)^{1/(p-1)}  \left(k2^{-pk}\right)^{1/(p-1)} =
\sum_{k=1}^\infty 2^{-k} = 1.
\]
Moreover,
\[
\int f_p(x)^pdx  = \sum_{k=1}^\infty \left(2^{k}/k\right)^{p/(p-1)}  \left(k2^{-pk}\right)^{1/(p-1)} = \sum_{k=1}^\infty k^{-1} = \infty
\]
and so $\mu_p \notin L^p$.  However, $\mu$ is very regular since $\dimA \mu_p = 1$.  To see this let  $x$ be in the support of $\mu_p$ and $0<r<R$.  We have
\[
\mu_p(B(x,R)) \leq  \sum_{k=m}^\infty 2^{-k} \lesssim 2^{-m},
\]
where $m$ is the largest  integer such that $2^{-m}>x+R$ and
\[
\mu_p(B(x,r))   \gtrsim 2^{-n},
\]
where $n$ is the largest integer such that $2^{-m}<x+r$. In particular,
\[
 \frac{\mu_p(B(x,R))}{\mu_p(B(x,r))} \leq  \left(\frac{2^{-m}}{2^{-n}}\right) \lesssim  \left(\frac{x+R}{x+r}\right) \leq \left(\frac{ R}{ r}\right).
\]
This shows that $\dimA \mu_p = \dimqA \mu_p = \dimAt \mu_p =1$ for all $\theta \in (0,1)$.  Moreover, let $\mu$ be defined by
\[
\mu=\sum_{k=1}^\infty 2^{-k} T_k(\mu_{1+2^{-k}}),
\] 
where $T_k(y) = 2^{-2^k}y+2^{-k}$.  Immediately we see that  $\mu \notin L^p$ for $p>1$. Moreover,
$\dimA \mu =1$, which can be seen by modifying an argument in \cite[Theorem 2.7(2)]{Fraser18}, which
considered the measure $\nu = \sum_k 2^{-k} \delta_{2^{-k}}$ and proved that it has Assouad
dimension 1.  The idea here is that for a given pair of scales $0<r<R$, either the measure $\mu$
looks like the  measure  $\nu$ or like one of the $\mu_p$, due to the super-exponential scaling of
$T_k$. 
\end{proof}

\begin{figure}[ht]
  \begin{center}\includegraphics[width=0.9\textwidth]{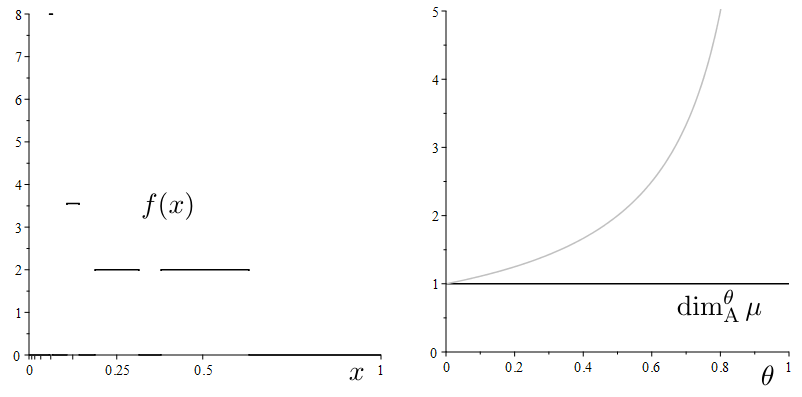}\end{center}
  \caption{The density $f_p$ with $p=1.5$ is plotted on the left and the Assouad spectrum of $\mu_p$ is plotted on the right (constant) along with the upper bound from Theorem \ref{thm:main} for reference (grey).}
  \label{fig:funnyexample}
\end{figure}

\subsection{A   measure with lower dimension 1 but not in $L^{-1}$.}

It is very straightforward to construct a measure with lower  dimension  equal to 1, but which fails
to be in $L^{-1}$.  For example, consider the measure with density  $f(x)=2x$ on $[0,1]$. For any
ball $B(x,r)$ with $0<r<1/2$ we have
\[
\mu(B(x,r)) = \int_{B(x,r)}f = \int_{\max\{0,x-r\}}^{\min\{1,x+r\}}2ydy = \min\{1,x+r\}^2 - \max\{0,x-r\}^2 \approx \begin{cases} 
     r^2 & x<r   \\
      xr& r\leq x 
   \end{cases}
\]
Therefore
\[
R/r \lesssim \frac{\mu(B(x,R)}{\mu(B(x,r)} \lesssim (R/r)^2
\]
with the lower bound attained at $x=1$ and the upper bound attained at $x=0$.  This shows $\dimL \mu
= 1 < \dimA \mu = 2$.  Further $f\in L^\infty([0,1])$ but
$f^{-1}(x) = 1/x$ and so $f\not\in L^{-1}([0,1])$.

\subsubsection{A stronger result for monotonic  densities and further work.}
Assuming $\mu$ has a  monotonic  density we can get an implication that is dual to our main theorem
(letting $\theta \to 0$).

\begin{proposition}
 Suppose $\mu$ is absolutely continuous with monotonic  density $f$ supported on $[0,1]$.   If
 $\dimL\mu \geq  1-1/p$ for some $p>1$, then  $f \in  L^{p'}([0,1])$ for $1 \leq p' < p$.
\end{proposition}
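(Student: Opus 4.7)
The plan is to exploit the monotonicity to convert the lower-dimension hypothesis (which is information about integrals of $f$) into a pointwise bound on $f$ near its only possible singular point, and then integrate. After applying the reflection $x\mapsto 1-x$ if necessary, which preserves both $L^{p'}$ norms and all Assouad-type dimensions of $\mu$, I may assume $f$ is monotone decreasing. If $f$ is bounded then $f\in L^{\infty}\subset L^{p'}$ and we are done, so I assume $f(x)\to\infty$ as $x\to 0^+$, which forces $0\in\supp\mu$.

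Next, I apply the hypothesis $\dimL\mu\geq 1-1/p$ at the point $0$. Fixing any $\alpha\in(0,1-1/p)$ and using $R$ close to $1$ so that $\mu(B(0,R))\leq 1$, the defining inequality of $\dimL\mu$ gives
\[
\mu([0,r]) = \mu(B(0,r)\cap[0,1]) \lesssim r^{\alpha}
\]
for all small $r>0$. Monotonicity now converts this integral bound into a pointwise bound on the density: since $f$ is decreasing, $r\cdot f(r)\leq\int_0^r f(t)\,dt = \mu([0,r])$, so $f(r)\lesssim r^{\alpha-1}$.

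Given $p'<p$, I choose $\alpha<1-1/p$ close enough to $1-1/p$ that $(1-\alpha)p'<1$, which is possible precisely because $p'<p$. Then
\[
\int_0^{1/2} f(r)^{p'}\,dr \lesssim \int_0^{1/2} r^{(\alpha-1)p'}\,dr < \infty,
\]
while on $[1/2,1]$ the density is bounded above by $f(1/2)<\infty$ by monotonicity and hence contributes nothing problematic. Adding the two pieces shows $f\in L^{p'}([0,1])$.

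I do not expect any real obstacle: the argument collapses once one notices that monotonicity makes $\mu([0,r])$ essentially equal to $r\cdot f(r)$. The only points to verify with a little care are that the density can blow up only at an endpoint of $[0,1]$, which is a consequence of monotonicity, and that this endpoint lies in $\supp\mu$, so that the hypothesis on $\dimL\mu$ is indeed applicable there.
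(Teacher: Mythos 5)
Your proof is correct and is essentially the paper's own argument: both apply the lower-dimension inequality at $x=0$ with $R$ fixed to deduce $\mu([0,r])\lesssim r^{\alpha}$ for $\alpha<1-1/p$, use monotonicity via $r f(r)\leq\int_0^r f$ to get the pointwise bound $f(r)\lesssim r^{\alpha-1}$, and then integrate, choosing $\alpha$ close enough to $1-1/p$ that $(1-\alpha)p'<1$. Your handling of the preliminary reductions (reflection, the bounded case, and the verification that $0\in\supp\mu$) is slightly more explicit than the paper's "without loss of generality" but amounts to the same thing.
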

\begin{proof}
  Without loss of generality we may assume that $f(x)>0$ on $(0,1)$ and non-increasing. 
  Let $0<s<1-1/p <1$,   $x=0$ and $0<r<R=1$. Then,
  \[
    C\left(\frac{1}{r}\right)^{s}\leq\frac{\int_{B(x,R)}f(x)dx}{\int_{B(x,r)}f(x)dx} =
    \frac{\int_{0}^{1} f(x)dx}{\int_{0}^{r} f(x)dx} = \frac{1}{\int_{0}^{r} f(x)dx}
  \]
  and so $F(y)  = \int_{0}^{y} f(x)dx \leq C^{-1} y^{s}$ and, since $f$ is non-increasing,
  $f(y)\lesssim y^{s-1} $. Therefore
\[
\| f \|_{p'} \lesssim \int_0^1 x^{p'(s-1)} dx <\infty
\]
provided $p'(1-s)<1$ and therefore $f \in  L^{p'}([0,1])$ for $p' < p$.
\end{proof}

It is easily seen that this cannot hold for arbitrary measures. The balanced Bernoulli measure on
the Cantor middle third set has lower dimension $\log2 / \log 3$ but is not even absolutely
continuous.  We do not know if such a result can be proved for absolutely continuous measures.

\begin{question}
If $\mu \in L^1$ and $\dimL \mu >0$, then is it true that $\mu \in L^p$ for some $p>1$ depending on
$\dimL \mu$?
\end{question}

One might conjecture the following.

\begin{conjecture}\label{thm:mainConj}
If $\mu \in L^1$ and $\dimL \mu >1-1/p$, then   $\mu \in L^p$ for $1 \leq p'<p$.
\end{conjecture}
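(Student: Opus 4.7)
The plan is to follow the natural strategy suggested by the monotonic case of the preceding proposition: first extract a uniform Frostman-type upper bound $\mu(B(x,r)) \lesssim r^s$ from the lower-dimension hypothesis, then promote this to a weak-type estimate on the density $f$, and finally apply the layer cake formula to obtain $L^{p'}$ integrability for every $p' < p$. Fix $s$ with $1-1/p < s < \dimL\mu$; setting $R=1$ in the definition of $\dimL$ and using $\mu(X)=1$ will give $\mu(B(x,r)) \lesssim r^s$ uniformly for $x \in \supp\mu$ and $r \in (0,1)$, and this bound extends to all of $\R$ by the triangle inequality since $\mu$ is concentrated on its support.

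The technical heart will be to establish the weak-type bound
\[
  \bigl|\{f > t\}\bigr| \lesssim t^{-1/(1-s)} \quad \text{for large } t.
\]
Once this is available, the layer cake formula $\|f\|_{p'}^{p'} = p'\int_0^\infty t^{p'-1}\,|\{f>t\}|\,dt$ will yield convergence whenever $p' < 1/(1-s)$, and the choice of $s$ ensures $1/(1-s) > p$ so that every $p' < p$ is covered. The natural attack on the weak-type bound combines Lebesgue differentiation with Frostman: at each Lebesgue point $x$ of $E_t := \{f > t\}$ there is a scale $r_x$ with $\mu(B(x,r_x)) \gtrsim t r_x$, and coupling this with $\mu(B(x,r_x)) \lesssim r_x^s$ will force $r_x \lesssim R_t := t^{-1/(1-s)}$. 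A Vitali extraction then produces a disjoint family $\{B(x_i,r_i)\}$ with each $r_i \leq R_t$ whose fivefold dilates cover $E_t$, so that $|E_t| \leq 10\sum_i r_i$.

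The main obstacle — and the likely reason this appears as a conjecture rather than a theorem — is that the two obvious constraints on such a covering are each too weak. The mass-conservation bound $\sum_i \mu(B_i) \leq 1$ combined with $\mu(B_i) \geq t r_i$ produces only $\sum_i r_i \leq 1/t$, which is exactly the weak $L^1$ estimate already implied by $f \in L^1$ and is not sufficient to reach $L^{p'}$ for any $p' > 1$. Extracting the sharper rate $t^{-1/(1-s)}$ will require exploiting the uniform radius bound $r_i \leq R_t$ in a much less wasteful manner — for instance via a Calder\'on--Zygmund decomposition at a scale adapted to $R_t$, a scale-by-scale dyadic analysis of $E_t$ grouped by the local size of the covering balls, or potential-theoretic tools exploiting that the Frostman-type bound forces boundedness of the Riesz potential $I_\alpha \mu$ for $\alpha > 1-s$ and opening the door to fractional-integration or Sobolev-embedding arguments. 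Any successful argument will presumably also need to use in an essential way that $f$ is a single $L^1$ density rather than an abstract Frostman measure.
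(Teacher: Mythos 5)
You do not give a complete proof, and you say so yourself; but it is important to note that the paper does not either. The statement appears there only as a conjecture, proved conditionally on Conjecture~\ref{thm:allIntervals}, and the authors state explicitly that the required ``finer detail on the implications of measure decay'' eluded them. Your reduction is essentially the same as theirs. Where you pass to the level sets $E_t=\{f>t\}$, use Lebesgue differentiation to find at almost every point of $E_t$ a radius $r_x\lesssim t^{-1/(1-s)}$ with $\mu(B(x,r_x))\gtrsim t\,r_x$, and then invoke Vitali and the layer-cake formula, the paper works with the dyadic level sets $A_k=\{2^k\le f<2^{k+1}\}$, uses Egoroff together with Lebesgue points to cover most of $A_k$ by finitely many disjoint intervals $I_i$ carrying density at least comparable to $2^k$, and then sums over $k$. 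The preparatory steps you do carry out (the Frostman bound $\mu(B(x,r))\lesssim r^s$ for $s<\dimL\mu$, the radius bound $r_x\lesssim t^{-1/(1-s)}$, the Vitali extraction) are all correct.

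The gap you identify is exactly the paper's open Conjecture~\ref{thm:allIntervals}. Your observation that disjointness plus $\sum_i\mu(B_i)\le 1$ yields only $\sum_i r_i\lesssim 1/t$ --- the weak-$L^1$ bound already implicit in $f\in L^1$ --- whereas one needs $\sum_i r_i\lesssim t^{-1/(1-s)}$, is precisely the assertion $\sum_i\lambda(I_i)\lesssim\varrho^{-p}$ for disjoint intervals with $\mu(I_i)\ge\varrho\,\lambda(I_i)$ (take $\varrho=t$). The paper gets no further than you do on this point: it proves the two-interval combination Lemma~\ref{thm:combiningIntervals}, verifies the special case of equally spaced intervals of equal length, and observes that the natural iteration is obstructed by the accumulating constant $C$ (and may genuinely fail for $C<1$ via a Cantor-like arrangement of gaps). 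None of the tools you suggest (a Calder\'on--Zygmund decomposition at scale $R_t$, dyadic grouping, Riesz potentials) is developed far enough to close this, so your proposal, like the paper's argument, establishes the implication only modulo that combinatorial estimate; the statement remains open.
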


A proof of this conjecture would require finer detail on the implications of measure decay than we were
able to establish. Consider, for instance, the following straightforward lemma.

\begin{lemma}\label{thm:combiningIntervals}
  Let $\mu$ be an absolutely continuous probability measure supported on $[0,1]$ with density
  $f$. Assume that there exists $C>0$ and $p>1$ such that
  for all $0<r<R<1$ and $x$ in the support of $\mu$,
  \begin{equation}\label{eq:lowerMeasureBound}
    \frac{\mu(B(x,R))}{\mu(B(x,r))} \geq C\left( \frac{R}{r} \right)^{1-1/p}.
  \end{equation}
  Let $I_1$ and $I_2$ be two disjoint intervals of lengths $l_1$ and $l_2$, respectively, that are
  separated by an interval of length $d>0$.
  Then
  \begin{equation}\label{eq:doubleTrouble}
    \mu(B(x_0,R)) \geq C^2  \left( \frac{2R}{l_1+l_2} \right)^{1-1/p}   \mu(I_1\cup I_2)
  \end{equation}
  for $2R\geq l_1+l_2 + 2d$ and $x_0 = a -l_1/(l_1+l_2)+d+(l_1+l_2)/2$, where $a$ is the left-hand
  endpoint of the leftmost of the intervals.
\end{lemma}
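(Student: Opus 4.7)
The plan is to apply the hypothesis~\eqref{eq:lowerMeasureBound} twice at the midpoint of one of the two intervals, each application contributing a factor of $C$. Positioning the intervals as $I_1=[a,a+l_1]$ and $I_2=[a+l_1+d,a+l_1+d+l_2]$ with midpoints $m_1,m_2$, we have $I_i=B(m_i,l_i/2)$. The weighted centre $x_0$ in the statement is chosen so that (a)~$B(x_0,(l_1+l_2)/2)$ captures subintervals of $I_1$ and $I_2$ of lengths proportional to $l_1,l_2$, and (b)~for at least one index $i\in\{1,2\}$ the quantity $(R-|x_0-m_i|)/l_i$ is comparable to $R/(l_1+l_2)$; the latter is an algebraic consequence of the explicit formula for $x_0$ combined with the constraint $2R\ge l_1+l_2+2d$.

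For this choice of $i$, I would apply~\eqref{eq:lowerMeasureBound} at the single centre $m_i$ twice: first with radii $l_i/2$ and $(l_1+l_2)/2$ to obtain
\[
\mu(B(m_i,(l_1+l_2)/2))\ge C\left(\frac{l_1+l_2}{l_i}\right)^{1-1/p}\mu(I_i),
\]
and second with radii $(l_1+l_2)/2$ and $R-|x_0-m_i|$, using the inclusion $B(m_i,R-|x_0-m_i|)\subseteq B(x_0,R)$ and the monotonicity of $\mu$:
\[
\mu(B(x_0,R))\ge C\left(\frac{2(R-|x_0-m_i|)}{l_1+l_2}\right)^{1-1/p}\mu(B(m_i,(l_1+l_2)/2)).
\]
Chaining the two estimates telescopes to
\[
\mu(B(x_0,R))\ge C^2\left(\frac{2(R-|x_0-m_i|)}{l_i}\right)^{1-1/p}\mu(I_i),
\]
and the desired bound then follows from property~(b) and the trivial inequality $\mu(I_i)\ge\mu(I_1\cup I_2)/2$ for whichever index carries at least half of the total mass.

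The main obstacle is reconciling the geometric and mass-dominance requirements on the same index $i$: the index yielding the best ratio $(R-|x_0-m_i|)/l_i$ need not coincide with the one carrying the larger $\mu$-mass. A short case analysis, leveraging the explicit weighted form of $x_0$ and the freedom to absorb absolute constants into $C$, is required to ensure both conditions hold simultaneously for at least one choice of $i$, and this is precisely what the non-obvious weighted position of $x_0$ in the statement is engineered to achieve.
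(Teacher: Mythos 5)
There is a genuine gap, and it is exactly the obstacle you flag at the end: the ``short case analysis'' you defer to cannot succeed, because the index with favourable geometry and the index carrying the mass can be forced apart. Concretely, take $l_1=\eps^2$, $l_2=\eps$, $d=1/2$ and $R$ at its minimal value $(l_1+l_2)/2+d$, with $\mu(I_1)$ close to $\mu(I_1\cup I_2)$. Then $R/(l_1+l_2)\approx 1/(2\eps)$ is huge, while $R-|x_0-m_1|\approx l_1/2$, so $(R-|x_0-m_1|)/l_1\approx 1/2$ and property (b) fails for $i=1$ by an unbounded factor; worse, your second application of \eqref{eq:lowerMeasureBound} at $m_1$ is not even valid there, since the outer radius $R-|x_0-m_1|\approx\eps^2/2$ is smaller than the inner radius $(l_1+l_2)/2\approx\eps/2$. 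The index $i=2$ has good geometry but carries negligible mass, so neither choice yields \eqref{eq:doubleTrouble}. A secondary problem is that even where your chain works, the steps $\mu(I_i)\ge\mu(I_1\cup I_2)/2$ and ``comparable up to absolute constants'' degrade the conclusion: the lemma asserts the constant $C^2$ and the exact ratio $2R/(l_1+l_2)$, and $C$ is a hypothesis constant that cannot absorb anything; the paper's subsequent discussion of iterating the lemma depends on this precision.

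The paper's proof avoids the dichotomy entirely by applying \eqref{eq:lowerMeasureBound} at \emph{both} midpoints $x_1,x_2$ simultaneously, enlarging $I_i$ to a ball $B_i$ whose radius is $l_i/2$ plus a share $d\,l_i/(l_1+l_2)$ of the gap proportional to $l_i$. This proportioning makes the ratio (outer radius)/(inner radius) equal to $(l_1+l_2+2d)/(l_1+l_2)$ for \emph{both} $i$, so the two lower bounds share a common factor and the masses add exactly to give $\mu(B_1\cup B_2)\ge C\bigl((l_1+l_2+2d)/(l_1+l_2)\bigr)^{1-1/p}\mu(I_1\cup I_2)$; moreover $\cl(B_1)\cup\cl(B_2)$ is a single interval of length $l_1+l_2+2d$ centred at $x_0$, and one further application of \eqref{eq:lowerMeasureBound} at $x_0$ telescopes to the stated bound with no loss. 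If you want to repair your argument, replace the single-centre strategy with this two-centre, proportional-radius decomposition.
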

\begin{proof}
  Let $x_1$ and $x_2$ be the midpoints of $I_1$ and $I_2$, respectively. Without loss of generality
  we assume $x_1<x_2$. Let $B_1=B(x_1,l_1+d \, l_1/(l_1+l_2))$ and $B_2=B(x_2,l_2+d l_2/(l_1+l_2))$
  and note that $B_1\cap B_2 = \varnothing$ and $\cl(B_1)\cup \cl(B_2)$ is a closed interval containing
  $I_1$ and $I_2$ with midpoint $x_0$. Here $\cl(\cdot)$ denotes the closure.

  Using \eqref{eq:lowerMeasureBound}, we see that
  \begin{align*}
    \mu(B_1\cup B_2) &= C\left( \frac{l_1+2d\frac{l_1}{l_1+l_2}}{l_1} \right)^{1-1/p} \mu(I_1)
    + C\left( \frac{l_2+2d\frac{l_2}{l_1+l_2}}{l_2} \right)^{1-1/p}\mu(I_2)\\
    &=C\left( 1+2d\frac{1}{l_1+l_2} \right)^{1-1/p} (\mu(I_1)+\mu(I_2))\\
    & = C\left( \frac{l_1+l_2+2d}{l_1+l_2} \right)^{1-1/p}\mu(I_1\cup I_2).
  \end{align*}
  Using \eqref{eq:lowerMeasureBound} once more for $2R > l_1+l_2+2d$ we obtain
  \[
    \mu(B(x_0,R)) \geq C\left( \frac{2R}{l_1+l_2+2d} \right)^{1-1/p}\mu(B_1\cup B_2)
    = C^{2}\left( \frac{2R}{l_1+l_2} \right)^{1-1/p} \mu(I_1\cup I_2)
  \]
  as required.
\end{proof}

Observe that \eqref{eq:doubleTrouble} resembles the formula one obtains for an interval $I_0$ of
length $l_1+l_2$ centred at $x_0$ with mass $\mu(I_1\cup I_2)$, namely 
\[\mu(B(x_0,R)) \geq C \left(\frac{2 R}{l_1+l_2}\right)^{1-1/p}\mu (I_1\cup I_2),\]
albeit with an additional factor of $C$.
This suggests that this scheme can be iterated, though the additional constant $C$ as well as the
restriction $R>l_1+l_2+2d$ do not allow this directly.
While we were unable to show this, we suspect that such an iteration can be used to show a statement
such as

\begin{conjecture}\label{thm:allIntervals}
  Let $\mu$ and $f$ be as in Lemma~\ref{thm:combiningIntervals} with the additional assumption that $C\geq 1$.
  Let $\{I_i\}$ be a finite set of pairwise disjoint intervals with $\mu(I_i)\geq\varrho
  \lambda(I_i)$, where $\lambda$ denotes Lebesgue measure. Then,
  \[
    \varrho^{\,p} \lesssim \frac{1}{\sum_{i=1}^N \lambda(I_i)}.
  \]
\end{conjecture}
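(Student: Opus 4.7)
The plan is to prove the conjecture by iterating Lemma~\ref{thm:combiningIntervals} along a binary tree, pairing disjoint intervals into progressively larger super-intervals while tracking a carefully chosen functional.

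First I would order the intervals $I_1,\dots,I_N$ from left to right (padding $N$ to a power of two if necessary) and pair them as $\{I_{2k-1},I_{2k}\}$. Applying Lemma~\ref{thm:combiningIntervals} to each pair produces balls $J^{(1)}_k$ which are themselves pairwise disjoint intervals (convex hulls of consecutive pairs) satisfying
\[
\mu(J^{(1)}_k) \geq C^{2}\left(\frac{\lambda(J^{(1)}_k)}{\lambda(I_{2k-1})+\lambda(I_{2k})}\right)^{1-1/p}\bigl(\mu(I_{2k-1})+\mu(I_{2k})\bigr).
\]
Repeating the pairing, after $m=\lceil\log_{2}N\rceil$ levels I am left with a single ball $J$ containing every $I_i$.

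The functional to track is $\Phi(I):=\mu(I)^{p}/\lambda(I)^{p-1}$, chosen because the density hypothesis $\mu(I_i)\geq\varrho\lambda(I_i)$ then reads $\Phi(I_i)\geq\varrho^{p}\lambda(I_i)$, so that $\sum_i \Phi(I_i)\geq\varrho^{p} L$ at the initial level. Raising the conclusion of Lemma~\ref{thm:combiningIntervals} to the $p$-th power and dividing by $\lambda(J^{(1)}_k)^{p-1}$ gives
\[
\Phi(J^{(1)}_k) \geq C^{2p}\,\frac{(\mu(I_{2k-1})+\mu(I_{2k}))^{p}}{(\lambda(I_{2k-1})+\lambda(I_{2k}))^{p-1}} \geq C^{2p}\varrho^{p}\bigl(\lambda(I_{2k-1})+\lambda(I_{2k})\bigr),
\]
so $\sum_{k}\Phi(J^{(1)}_k)\geq C^{2p}\varrho^{p}L$. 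I would then iterate this inequality, hoping to pick up a factor of $C^{2p}$ at every level, and at the top of the tree combine the accumulated lower bound $\Phi(J)\gtrsim C^{2pm}\varrho^{p}L$ with the a priori upper bounds $\mu(J)\leq 1$ and $\lambda(J)\leq 1$ to extract $\varrho^{p}L\lesssim 1$.

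The hard part, and the reason the authors flag this as a conjecture, is that the recursion does not close cleanly. The functional $\Phi$ is positively homogeneous of degree one and convex, hence \emph{subadditive} under disjoint unions: $\Phi(A\cup B)\leq \Phi(A)+\Phi(B)$ for disjoint $A,B$. This is exactly the wrong direction to propagate a lower bound from one level to the next without additional amplification, and the amplification factor $(\lambda(J^{(l+1)})/(\lambda(J^{(l)}_{2j-1})+\lambda(J^{(l)}_{2j})))^{1-1/p}$ supplied by Lemma~\ref{thm:combiningIntervals} is only nontrivial when the gap between the two children is comparable to their combined length. When the gaps are small the factor is of order one, so the cleanly-accumulated gain after $\log_{2}N$ levels is merely $C^{2p\log_{2}N}$, and this does not carry enough information about $L$ to conclude. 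To close the argument I would look for an adaptive pairing strategy that arranges for the amplification ratio to be uniformly bounded away from one at every level, or replace the binary tree by a direct covering argument: apply Lemma~\ref{thm:combiningIntervals} to many windows simultaneously and use a one-dimensional Besicovitch-type selection on the midpoints to produce disjoint balls whose masses sum to at most $\mu([0,1])=1$, then apply H\"older's inequality with exponents $p$ and $p/(p-1)$ to convert a sum of the form $\sum\lambda(I_i)^{1/p}R_i^{1-1/p}$ into the desired bound on $L$. Making either strategy quantitative, while keeping track of the cumulative constant, is where the real work lies.
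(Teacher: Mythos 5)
The statement you were asked to prove is labelled a \emph{conjecture} in the paper, and the paper does not prove it: the authors verify only the special case of $N$ equally spaced intervals of common length $l$ (a single application of the hypothesis \eqref{eq:lowerMeasureBound} to the $N$ disjoint balls $B(x_i,1/(2N))$ that tile $[0,1]$, followed by summation), and they explicitly remark that iterating Lemma~\ref{thm:combiningIntervals} ``does not allow this directly'' because of the accumulating constant $C$ and the restriction $2R\geq l_1+l_2+2d$. So there is no proof in the paper against which to measure your attempt, and your proposal, which you correctly do not present as complete, should be judged as a diagnosis of the difficulty.

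On that score you are on target. The obstruction you isolate is exactly the one the authors flag: each level of the binary pairing costs a factor of $C^{2}$ (harmless only because $C\geq 1$ is assumed) but gains only the ratio $\left(2R/(l_1+l_2)\right)^{1-1/p}$, which degenerates to a constant when the separating gaps are small relative to the lengths, so after $\log_2 N$ levels the accumulated amplification need not encode the total length $L=\sum_i\lambda(I_i)$ at all. Your reformulation via the functional $\Phi(I)=\mu(I)^p/\lambda(I)^{p-1}$, and the observation that $\Phi$ is subadditive and hence propagates lower bounds in the wrong direction, is a cleaner way of saying the same thing. Of your two proposed repairs, the second (a one-shot covering of $[0,1]$ by disjoint balls, one per interval, followed by H\"older with exponents $p$ and $p/(p-1)$) is essentially the authors' special-case verification and is the more promising route; the genuine work in general position is that a ball around $I_i$ whose radius reflects the local sparseness of the collection may be forced to overlap its neighbours when the $\lambda(I_i)$ are wildly different, which is where your Besicovitch-type selection would have to earn its keep while preserving the factor $\left(R_i/\lambda(I_i)\right)^{1-1/p}$. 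One further caution for whoever completes the argument: \eqref{eq:lowerMeasureBound} is assumed only for $x$ in the support of $\mu$, whereas the centres $x_0,x_1,x_2$ used in Lemma~\ref{thm:combiningIntervals} (and in your pairing scheme) are midpoints of intervals and need not lie in the support; any rigorous iteration must either perturb the centres into the support or strengthen the hypothesis.
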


A special case occurs if all $N$ intervals are of equal length $l$ and equally spaced. Let $x_i$ be the
  midpoint of $I_i$. Then 
  \[
    1=\mu(B(\tfrac12,\tfrac12))
    = \sum_{i=1}^N\mu(B(x_i)) 
    \geq \sum_{i=1}^N C\left(\frac{1/N}{l}\right)^{1-1/p} \mu(I_i) 
    \geq N \varrho\, l \left( \frac{1}{N l} \right)^{1-1/p}
    = \varrho\,(N\,l)^{1/p}
  \]
  and so
  \(
    \varrho^p \leq (N\,l)^{-1}
  \)
  as required.

We can also imagine why this might not hold if  $C<1$. Suppose $C<1$ and place $2^n$ intervals of
length $l$ in a ``Cantor-like'' arrangement, where pairs are separated by a gap of size $\alpha$,
pairs of pairs are separated by $\alpha^2$, and so on.  Then $\alpha>1$ can be picked such that
$C(\alpha l / l)^{1-1/p} = 1$, implying that $f(x)=0$ and the lower dimension condition is still
satisfied.

However, equipped with a statement like Conjecture~\ref{thm:combiningIntervals} one can prove
Conjecture~\ref{thm:mainConj}.

\begin{proof}[Proof of Conjecture~\ref{thm:mainConj} using Conjecture~\ref{thm:allIntervals}]

Let $\fL$ be the Lebesgue points of $f(x)$, i.e.\ the set of points $x$ where $0\leq f(x)<\infty$ and
\[
  g_k(x) = \frac{1}{2} \int_{B(x,2^{-k})} \hspace{-1cm} 2^k|f(y)-f(x)| d\lambda(y) \to 0\qquad
  \text{as}\;k\to\infty.
\]
Define $A_{0} = \{x\in\fL \;:\; f(x)<1\}$ and $A_k = \{x\in\fL\;:\;2^{k}\leq f(x)< 2^{k+1}\}$ for $k\in
\N$.
Note that $\lambda(\fL)=1$ and $\|f\|_p^p = \int_{[0,1]}f^p d\lambda = \sum_{i=0}^\infty
\int_{A_k}f^p d\lambda$ where $\lambda$ is Lebesgue measure.

Fix $\eps>0$ and temporarily fix $k\in \N_0$. We first show that $\int_{A_k} f^{p-\eps}d\lambda
\lesssim 2^{-\eps k}$.
If $\mu(A_k)=0$ we are done. For $k=0$, we get the bound $\int_{A_0}f^{p-\eps}d\lambda\leq 1$.
Hence we can assume $k\geq 1$ and $\mu(A_k)\neq 0$.
Let $0<\delta < \min\{2^{-pk},\mu(A_k)\}$.
Then, by Egoroff's theorem, there exists $B_{\delta}\subseteq\fL$ with
$\lambda(B_\delta)>1-\delta$ such that $g_k(x)\to 0$ uniformly over $x\in B_\delta$.
Let $l$ be large enough such that $2^{-l}<\eps$ and $g_{l'}(x)<\eps$ for all $l'\geq l$ and $x\in
B_\delta$.
Now let $I= \bigcup_{x\in B_\delta\cap A_k}B(x,2^{-l})$ and note that $I$ is a finite collection
of intervals $\{I_i\}$ with $\lambda(I_i)\geq 2^{-l}$.  We
obtain
\[
  \int_{I}f^{p-\eps}d\lambda \leq \sum_{i=1}^{\#\{I_i\}}\lambda(I_i)\left( 2^{k+1}+\delta
  \right)^{p-\eps}
  \leq \sum_{i=1}^{\#\{I_i\}}\lambda(I_i) 2^{(k+2)(p-\eps)}.
\]
and so, using Conjecture~\ref{thm:allIntervals},
\[
  \int_{A_k}f^{p-\eps}d\lambda \leq \int_{A_k\cap B_\delta} \hspace{-2em}f^{p-\delta}d\lambda
  +\int_{A_k\cap ([0,1]\setminus B_\delta)}\hspace{-4em}f^{p-\eps}d\lambda
  \leq 2^{2(p-\eps)}2^{k(p-\eps)}\sum_{i=1}^{\#\{I_i\}}\lambda(I_i)
  \;+\;\lambda([0,1]\setminus B_\delta) 2^{(p-\eps)k}
  \lesssim 2^{-\eps k}.
\]
Finally,
\[
  \|f\|^{p-\eps}_{p-\eps} d\lambda = \sum_{k=0}^\infty \int_{A_k}f^{p-\eps}d\lambda \leq
  \sum_{k=0}^\infty 2^{-\eps k}<\infty
\]
and so $\mu\in L^{p-\eps}$ and letting $\eps \to 0$ proves Conjecture~\ref{thm:mainConj}. 
\end{proof}

\section{Absolute continuity with general reference measures.}

For completeness we include the following result which considers  absolute continuity with respect
to general reference measures, the proof of which is almost identical to Theorem \ref{thm:main}

\begin{theorem} \label{generalgeneral}
 Let $\nu$ be a  measure supported on a non-empty compact set  $X \subseteq \R^d$ and suppose  $s,t\geq 0$ are such that for all $x \in X$ and $r>0$
  \begin{equation} \label{regdi}
r^s \lesssim \mu(B(x,r)) \lesssim r^t.
  \end{equation}
Suppose  $\mu$ is  a measure that is absolutely continuous with respect to $\nu$
 and suppose $p_1, p_2 \in [1,\infty)$ are such that
 \[
 \lVert f
    \rVert_{p_1} =   \int_X f^{p_1} d \nu < \infty \quad\text{and}\quad
  \lVert f
    \rVert_{-p_2} = \int_X f^{-p_2} d\nu < \infty.
 \]
Then
  \begin{equation*}
    \dimAt \mu \leq \frac{s-\theta t}{1-\theta} +\frac{p_1s+\theta p_2t}{p_1 p_2 (1-\theta)}
    \quad \text{and} \quad 
    \dimLt \mu \geq \frac{t-\theta s}{1-\theta}-\frac{\theta p_1s + p_2t}{p_1 p_2 (1-\theta)} .
  \end{equation*}  
\end{theorem}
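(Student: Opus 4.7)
The plan is to follow the proof of Theorem \ref{thm:main} almost verbatim, with Lebesgue measure replaced throughout by the reference measure $\nu$, and with the additional exponents $s,t$ appearing through the two-sided bound \eqref{regdi}. Since $\mu$ is absolutely continuous with respect to $\nu$ with density $f$, we can still write
\[
\frac{\mu(B(x,R))}{\mu(B(x,r))} \ = \ \frac{\|f \cdot \chi_{B(x,R)}\|_1}{\|f \cdot \chi_{B(x,r)}\|_1},
\]
where now all $L^p$-norms are taken with respect to $\nu$. The only new ingredient is that $\|\chi_{B(x,r)}\|_q^{q}=\nu(B(x,r))$, which we control from above and below by $r^t$ and $r^s$ via the hypothesis \eqref{regdi}.

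For the Assouad spectrum, I would fix $\theta\in(0,1)$ and set $r=R^{1/\theta}$. Writing $q_1$ for the H\"older conjugate of $p_1$, so $1/q_1=1-1/p_1$, I would apply H\"older's inequality to the numerator and the reverse H\"older inequality to the denominator (the latter being permissible because $\|f\|_{-p_2}<\infty$ forces $f>0$ almost everywhere) to obtain
\[
\frac{\mu(B(x,R))}{\mu(B(x,r))} \ \leq \ \frac{\|f\|_{p_1}}{\|f\|_{-p_2}} \cdot \frac{\nu(B(x,R))^{1/q_1}}{\nu(B(x,r))^{(1+p_2)/p_2}} \ \lesssim \ \frac{R^{t(1-1/p_1)}}{r^{s(1+1/p_2)}},
\]
where the last step invokes \eqref{regdi} in the direction $\nu(B(x,R))\lesssim R^t$ and $\nu(B(x,r))\gtrsim r^s$ — both choices are forced by the positivity of the exponents $1/q_1$ and $(1+p_2)/p_2$. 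Substituting $r=R^{1/\theta}$, writing the result as $(R/r)^\alpha$, and simplifying gives $\alpha = \frac{s-\theta t}{1-\theta}+\frac{p_1 s+\theta p_2 t}{p_1 p_2(1-\theta)}$, which is the claimed upper bound for $\dimAt\mu$.

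The lower spectrum bound is obtained by swapping the roles of the two inequalities: I apply the reverse H\"older inequality to $\mu(B(x,R))$ and H\"older's inequality to $\mu(B(x,r))$, this time invoking \eqref{regdi} as $\nu(B(x,R))\gtrsim R^s$ and $\nu(B(x,r))\lesssim r^t$ (again dictated by sign of the exponents). This produces
\[
\frac{\mu(B(x,R))}{\mu(B(x,r))} \ \gtrsim \ \frac{R^{s(1+1/p_2)}}{r^{t(1-1/p_1)}},
\]
and substituting $r=R^{1/\theta}$ yields the stated lower bound for $\dimLt\mu$ after an identical simplification.

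There is no genuine obstacle here: the only matter requiring any care is bookkeeping the direction of \eqref{regdi} used at each step, and ensuring the final exponent, after combining powers of $R$ and $R^{1/\theta}$, is rewritten as a power of $R/r$ with the correct denominator $1-\theta$. The Lebesgue-measure case $s=t=1$ recovers Theorem \ref{thm:main} exactly, which serves as a useful sanity check.
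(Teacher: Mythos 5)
Your proposal is correct and follows essentially the same route as the paper: H\"older on the numerator, reverse H\"older on the denominator, the two-sided regularity hypothesis \eqref{regdi} to bound $\nu(B(x,R))$ and $\nu(B(x,r))$ in the appropriate directions, and the same algebraic simplification (the paper likewise only writes out the Assouad case and declares the lower spectrum case ``similar and omitted''). The exponent arithmetic in your sketch checks out against the stated bounds.
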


\begin{proof}
 Fix $\theta\in(0,1)$ and  $0<r=R^{1/\theta} \leq 1$.  Write $q_1\in (1,\infty)$ for the H\"older conjugate of $p_1$.  Then, by H\"older's inequality
  \begin{align*}
    \frac{\mu(B(x,R))}{\mu(B(x,r))} \leq \frac{\lVert f
    \rVert_{p_1}\lVert\chi_{B(x,R)}\rVert_{q_1}}{\lVert f \rVert_{-p_2}\lVert
    \chi_{B(x,r)}\rVert_{p_2/(1+p_2)}} 
    &\lesssim     \frac{\left(\int \chi_{B(x,R)}^{q_1}d\nu\right)^{1/q_1}}{\left(\int
    \chi_{B(x,r)}^{p_2/(1+p_2)}d\nu\right)^{1+1/p_2}}\\
    &\lesssim  \dfrac{R^{t(1-1/p_1)}}{r^{s(1+1/p_2)}}  \quad \text{(by \eqref{regdi})}\\
    &=   \left(\frac{R}{r}\right)^{\frac{t(1-1/p_1)-s(1+1/p_2)/\theta}{1-1/\theta}}.
  \end{align*}
Therefore,
\begin{equation*}
  \dimAt\mu \leq \frac{t(1-1/p_1)-s(1+1/p_2)/\theta}{1-1/\theta}
  =\frac{s-\theta t}{1-\theta} +\frac{p_1s+\theta p_2t}{p_1 p_2 (1-\theta)},
\end{equation*}
as required.  The estimate for the lower spectrum is similar and omitted, see the proof of Theorem \ref{thm:main}.
\end{proof}

Note  that, for all $\varepsilon>0$, we can always choose $s$ and $t$ in the statement of Theorem \ref{generalgeneral}  satisfying
\[
\dimL \nu -\varepsilon \leq t \leq s  \leq \dimA \mu + \varepsilon
\]
but better choices are sometimes possible.

\section*{Acknowledgements.}
This work was started while ST was visiting JMF at the University of St Andrews in April 2019. 
ST wishes to thank St Andrews for the hospitality during his visit.

\end{document}